\numberwithin{equation}{section}
\numberwithin{figure}{section}
\theoremstyle{plain}
\newtheorem{thm}{\protect\theoremname}[section]
  \theoremstyle{plain}
  \newtheorem{lem}[thm]{\protect\lemmaname}
  \theoremstyle{definition}
  \newtheorem{defn}[thm]{\protect\definitionname}
  \theoremstyle{plain}
  \newtheorem{conjecture}[thm]{\protect\conjecturename}
  \theoremstyle{remark}
  \newtheorem{rem}[thm]{\protect\remarkname}
 \newcommand\thmsname{\protect\theoremname}
 \newcommand\nm@thmtype{theorem}
 \theoremstyle{plain}
 \newenvironment{namedthm}[1][Undefined Theorem Name]{
   \ifx{#1}{Undefined Theorem Name}\renewcommand\nm@thmtype{theorem*}
   \else\renewcommand\thmsname{#1}\renewcommand\nm@thmtype{namedtheorem}
   \fi
   \begin{\nm@thmtype}}
   {\end{\nm@thmtype}}
  \theoremstyle{plain}
  \newtheorem{prop}[thm]{\protect\propositionname}
  \theoremstyle{plain}
  \newtheorem{cor}[thm]{\protect\corollaryname}
  \theoremstyle{definition}
  \newtheorem{example}[thm]{\protect\examplename}
\subjclass[2010]{11R21,11R45,11G50,14G05}
  \providecommand{\conjecturename}{Conjecture}
  \providecommand{\corollaryname}{Corollary}
  \providecommand{\definitionname}{Definition}
  \providecommand{\examplename}{Example}
  \providecommand{\lemmaname}{Lemma}
  \providecommand{\propositionname}{Proposition}
  \providecommand{\remarkname}{Remark}
  \providecommand{\theoremname}{Theorem}
\providecommand{\theoremname}{Theorem}
\begin{document}

\title{Densities of rational points and number fields}

\author{Takehiko Yasuda}
\begin{abstract}
We relate the problem of counting number fields, in particular, Malle's
conjecture with the problem of counting rational points on singular
Fano varieties, in particular, Batyrev and Tschinkel's generalization
of Manin's conjecture.
\end{abstract}

\address{Department of Mathematics, Graduate School of Science, Osaka University,
Toyonaka, Osaka 560-0043, Japan, tel:+81-6-6850-5326, fax:+81-6-6850-5327}

\email{takehikoyasuda@math.sci.osaka-u.ac.jp}

\maketitle
\global\long\def\AA{\mathbb{A}}
\global\long\def\PP{\mathbb{P}}
\global\long\def\NN{\mathbb{N}}
\global\long\def\GG{\mathbb{G}}
\global\long\def\ZZ{\mathbb{Z}}
\global\long\def\QQ{\mathbb{Q}}
\global\long\def\CC{\mathbb{C}}
\global\long\def\FF{\mathbb{F}}
\global\long\def\LL{\mathbb{L}}
\global\long\def\RR{\mathbb{R}}
\global\long\def\MM{\mathbb{M}}
\global\long\def\SS{\mathbb{S}}

\global\long\def\bx{\mathbf{x}}
\global\long\def\bf{\mathbf{f}}
\global\long\def\ba{\mathbf{a}}
\global\long\def\bs{\mathbf{s}}
\global\long\def\bt{\mathbf{t}}
\global\long\def\bw{\mathbf{w}}
\global\long\def\bb{\mathbf{b}}
\global\long\def\bv{\mathbf{v}}
\global\long\def\bp{\mathbf{p}}
\global\long\def\bq{\mathbf{q}}
\global\long\def\bm{\mathbf{m}}
\global\long\def\bj{\mathbf{j}}
\global\long\def\bM{\mathbf{M}}
\global\long\def\bd{\mathbf{d}}

\global\long\def\cN{\mathcal{N}}
\global\long\def\cW{\mathcal{W}}
\global\long\def\cY{\mathcal{Y}}
\global\long\def\cM{\mathcal{M}}
\global\long\def\cF{\mathcal{F}}
\global\long\def\cX{\mathcal{X}}
\global\long\def\cE{\mathcal{E}}
\global\long\def\cJ{\mathcal{J}}
\global\long\def\cO{\mathcal{O}}
\global\long\def\cD{\mathcal{D}}
\global\long\def\cZ{\mathcal{Z}}
\global\long\def\cR{\mathcal{R}}
\global\long\def\cC{\mathcal{C}}
\global\long\def\cL{\mathcal{L}}

\global\long\def\fs{\mathfrak{s}}
\global\long\def\fp{\mathfrak{p}}
\global\long\def\fm{\mathfrak{m}}
\global\long\def\fX{\mathfrak{X}}
\global\long\def\fV{\mathfrak{V}}
\global\long\def\fx{\mathfrak{x}}
\global\long\def\fv{\mathfrak{v}}
\global\long\def\fY{\mathfrak{Y}}

\global\long\def\rv{\mathbf{\mathrm{v}}}
\global\long\def\rx{\mathrm{x}}
\global\long\def\rw{\mathrm{w}}
\global\long\def\ry{\mathrm{y}}
\global\long\def\rz{\mathrm{z}}
\global\long\def\bv{\mathbf{v}}
\global\long\def\bx{\mathbf{x}}
\global\long\def\bw{\mathbf{w}}
\global\long\def\sv{\mathsf{v}}
\global\long\def\sx{\mathsf{x}}
\global\long\def\sw{\mathsf{w}}

\global\long\def\Spec{\mathrm{Spec}\,}
\global\long\def\Hom{\mathrm{Hom}}

\global\long\def\Var{\mathrm{Var}}
\global\long\def\Gal{\mathrm{Gal}}
\global\long\def\Jac{\mathrm{Jac}}
\global\long\def\Ker{\mathrm{Ker}}
\global\long\def\Image{\mathrm{Im}}
\global\long\def\Aut{\mathrm{Aut}}
\global\long\def\st{\mathrm{st}}
\global\long\def\diag{\mathrm{diag}}
\global\long\def\characteristic{\mathrm{char}}
\global\long\def\tors{\mathrm{tors}}
\global\long\def\sing{\mathrm{sing}}
\global\long\def\red{\mathrm{red}}
\global\long\def\Ind{\mathrm{Ind}}
\global\long\def\nr{\mathrm{nr}}
\global\long\def\ord{\mathrm{ord}}
\global\long\def\pt{\mathrm{pt}}
\global\long\def\op{\mathrm{op}}
\global\long\def\Val{\mathrm{Val}}
\global\long\def\Res{\mathrm{Res}}
\global\long\def\Pic{\mathrm{Pic}}
\global\long\def\disc{\mathrm{disc}}
 \global\long\def\length{\mathrm{length}}
\global\long\def\sm{\mathrm{sm}}
\global\long\def\top{\mathrm{top}}
\global\long\def\rank{\mathrm{rank}}
\global\long\def\Mot{\mathrm{Mot}}
\global\long\def\age{\mathrm{age}}
\global\long\def\et{\mathrm{et}}
\global\long\def\hom{\mathrm{hom}}
\global\long\def\tor{\mathrm{tor}}
\global\long\def\reg{\mathrm{reg}}
\global\long\def\cont{\mathrm{cont}}
\global\long\def\crep{\mathrm{crep}}
\global\long\def\Stab{\mathrm{Stab}}
\global\long\def\discrep{\mathrm{discrep}}
\global\long\def\crediv{\mathrm{crediv}}

\global\long\def\Conj#1{\mathrm{Conj}(#1)}
\global\long\def\KConj#1{K\mbox{-}\mathrm{Conj}(#1)}
\global\long\def\Mass#1{\mathrm{Mass}(#1)}
\global\long\def\Inn#1{\mathrm{Inn}(#1)}
\global\long\def\bConj#1{\mathbf{Conj}(#1)}
\global\long\def\Hilb{\mathrm{Hilb}}
\global\long\def\sep{\mathrm{sep}}
\global\long\def\GL#1#2{\mathrm{GL}_{#1}(#2)}
\global\long\def\codim{\mathrm{codim}}
\global\long\def\cd{\mathrm{cd}}

\global\long\def\Eta#1#2{#1\text{-}\mathrm{Eta}(#2)}
\global\long\def\Fie#1#2{#1\text{-}\mathrm{Fie}(#2)}
\global\long\def\etale#1#2{#1\text{-}\mathrm{eta}(#2)}
\global\long\def\ContHomGK#1{\Hom_{\cont}(\Gal(\bar{K}/K),#1)}
\global\long\def\ContSurGK#1{\mathrm{Sur}_{\cont}(\Gal(\bar{K}/K),#1)}
\global\long\def\fie#1#2{#1\text{-}\mathrm{fie}(#2)}
\global\long\def\ind{\mathrm{ind}}
\global\long\def\prim{\mathrm{prim}}
\global\long\def\sprim{\mathrm{sprim}}
\global\long\def\orig{\mathrm{orig}}
\global\long\def\vorig{\mathrm{vorig}}
\global\long\def\nor{\mathrm{nor}}
\global\long\def\Zeff{Z_{\mathrm{eff},n}(\PP^{m})}

\tableofcontents{}

\section{Introduction\label{sec:Introduction}}

In this paper, we relate two subjects in the number theory: the density
of rational points and the one of number fields.

As for the former, we consider an algebraic variety $X$ over a number
field $K$, and a suitable set $U\subset X(K)$ of $K$-points, obtained
by removing accumulating subsets\emph{ }from $X(K)$. It is expected
that the distribution of points in $U$ reflects the geometry of $X$.
We are interested in the case where $X$ is a Fano variety. Then $U$
tends to be an infinite set. Given a metric on the anti-canonical
sheaf $\omega_{X}^{-1}$, we can define a height function
\[
H:X(K)\to\RR_{>0}
\]
so that for each $B\in\RR_{>0}$, 
\[
N_{U}(B):=\sharp\{x\in U\mid H(x)\le B\}<\infty.
\]
We are interested in the asymptotic behavior of $N_{U}(B)$ as $B$
tends to infinity. Manin's conjecture \cite{Franke:1989go} concerns
this problem. A not so precise version states that 
\[
N_{U}(B)\sim c\cdot B\cdot(\log B)^{\rho-1},
\]
with $\rho$ the Picard number of $X$, under the assumption that
$X$ is smooth. Batyrev and Tschinkel \cite{MR1679843} generalized
it to Fano varieties having canonical singularities, where $\rho$
was replaced with a number incorporating singularities. 

As for the density of number fields, let $G$ be a transitive subgroup
of the symmetric group $S_{n}$. We consider degree $n$ extensions
$L/K$ of a given number field $K$ such that its Galois closure has
Galois group permutation isomorphic to $G$: we call such an extension
$L/K$ a \emph{$G$-field over $K$}. We put $M_{G,K}(B)$ to be the
number of $G$-fields with $|N_{K/\QQ}(D_{L/K})|\le B$, where $D_{L/K}$
is the discriminant of $L/K$ and $N_{K/\QQ}$ the norm of $K/\QQ$.
The asymptotic behavior of $M_{G,K}(B)$ as $B$ tends to infinity
is another concern of ours. Malle's conjecture \cite{MR1884706,MR2068887}
states that
\[
M_{G,K}(B)\sim c\cdot B^{1/\ind(G)}\cdot(\log B)^{\beta(G,K)-1}
\]
for some constant $c$ and invariants $\ind(G)$, $\beta(G,K)$ determined
by $G$ and $K$.

We would like to relates Malle's conjecture, and Batyrev and Tschinkel's
conjecture. Our strategy is to consider the quotient variety 
\[
X:=(\PP_{K}^{m})^{n}/G
\]
for the natural $G$-action on the power of the projective space.
If $m\cdot\ind(G)\ge2$, then $X$ is a Fano variety with only canonical
singularities. We exhibit a correspondence between \emph{primitive}
$K$-points of $X$ and \emph{original} $F$-points of $\PP_{K}^{m}$
with $F/K$ running over $G$-fields. Since the correspondence respect
heights, we obtain an equality among height zeta functions of the
form: 
\[
Z_{X(K)^{\prim}}(s)=\frac{1}{\sharp Z(G)}\cdot\sum_{L\in\Fie GK}Z_{\PP^{m}(L^{G_{1}})^{\orig}}(s),
\]
(for details, see Theorem \ref{thm: zeta equal permutation}). We
expect that the left side contains information about the density of
primitive $K$-points of $X$ and the right side contains information
about the density of $G$-fields. Using this equality together with
additional conjectures, we show partial results on implications between
Malle's conjecture, and Batyrev and Tschinkel's conjecture. 

Finally we briefly mention relations to other works. In the paper
\cite[page 153]{Ellenberg:2005bn} of Ellenberg and Venkatesh, it
was mentioned, as a comment by Tschinkel, a similarity between their
work on Malle's conjecture and Batyrev's one on rational points on
Fano varieties. In another paper of theirs \cite[page 732]{Ellenberg:2006js},
the relation between Malle's conjecture and Manin's conjecture was
more explicitly noted. In the same paper, they use the field of multi-symmetric
functions, which is the function field of the above quotient variety
$X$. The approach using $X$ or its function field is regarded as
a revisitation of Noether's approach to the inverse Galois problem
(see \cite{MR2363329}), incorporating newer materials such as Malle's
and Manin's conjectures. Our work is also a global analogue of the
wild McKay correspondence \cite{Wood-Yasuda-I,MR3230848,Yasuda:2013fk,Yasuda:2014fk},
which relates weighted counts of extensions of a local field with
stringy invariants of quotient varieties. 

Throughout the paper, $K$ denotes a number field, that is, a finite
extension of $\QQ$.

\subsection*{Acknowledgements}

I would like to thank Takashi Taniguchi, Takuya Yamauchi and Akihiko
Yukie for stimulating discussions. I also thank Seidai Yasuda for
reading the first draft and giving me many helpful comments.

\section{Rational points on singular Fano varieties\label{sec: Points-on-Fano-1}}

\subsection{Singularities\label{sec:Singularities}}

We first set up terminology on singularities. For details, we refer
the reader to \cite{MR3057950}. 

Let $X$ be a normal variety over $K$. We suppose that $X$ is $\QQ$-Gorenstein,
that is, the canonical divisor $K_{X}$ is $\QQ$-Cartier. A \emph{divisor
over $X$ }is a prime divisor on a normal modification $Y$ of $X$
($Y$ is a normal variety which is proper and birational over $X$).
Here we identify prime divisors on different modifications if they
give the same valuation of the function field. For a normal modification
$f:Y\to X$ with exceptional prime divisors $E_{i}$, we can uniquely
write 
\[
K_{Y}=f^{*}K_{X}+\sum_{i}a(E_{i})\cdot E_{i}\quad(a(E_{i})\in\QQ).
\]
The number $a(E_{i})$ is independent of the modification and it makes
sense to define $a(E)$ for a divisor $E$ over $X$: we call it the
\emph{discrepancy }of $E$. We call $E$ a \emph{crepant divisor }if
$a(E)=0.$ We define the \emph{minimal discrepancy }of $X$ by
\[
\discrep(X):=\inf_{E}a(E),
\]
where $E$ runs over all divisors over $X$. We say that $X$ is \emph{terminal
(resp. canonical) }or that $X$ has only\emph{ terminal (resp. canonical)}
singularities if 
\[
\discrep(X)>0\quad(\text{resp. }\ge0).
\]

\subsection{Heights\label{sec:Heights}}

Next, we recall the notion of heights of points. For details, we refer
the reader to \cite{MR2019019,MR2647601}.

Let $\Val(K)$ be the set of valuations (places) of $K$. For $v\in\Val(K)$,
we denote by $K_{v}$ the corresponding completion of $K$. If $p\in\Val(\QQ)$
is such that $v\mid p$ and $|\cdot|_{p}$ denotes the $p$-adic norm
on $\QQ_{p}$, then we define the \emph{$v$-adic norm }on $K_{v}$
by 
\[
|a|_{v}:=|N_{K_{v}/\QQ_{p}}(a)|_{p}.
\]

Let $X$ be a quasi-projective variety over $K$ and $\cL$ an invertible
sheaf on $X$. For $v\in\Val(K)$ and $x\in\cL(K_{v})$, the pullback
$x^{*}\cL$ to $\Spec K_{v}$ is regarded as a one-dimensional $K_{v}$-vector
space. We say that $\cL$ is \emph{(adelically) metrized }if $\cL$
is endowed with data of $v$-adic norms 
\[
\left\Vert \cdot\right\Vert _{v}:x^{*}\cL\to\RR_{\ge0}
\]
for all $v\in\Val(K)$ and $x\in\cL(K_{v})$ which satisfy some conditions
(for instance, see \cite{MR2019019,MR2647601}).

Given a metrized invertible sheaf $\cL$ on $X$, the \emph{height
}of a $K$-point $x\in X(K)$ is given by
\[
H_{\cL}(x):=\prod_{v\in\Val(K)}\left\Vert s\right\Vert _{v}^{-1}\in\RR_{>0}
\]
for any $0\ne s\in x^{*}\cL$. Given two metrized invertible sheaves
$\cL_{1}$ and $\cL_{2}$, then we can naturally metrize the tensor
product $\cL_{1}\otimes\cL_{2}$, and the associated height function
satisfies
\[
H_{\cL_{1}\otimes\cL_{2}}(x)=H_{\cL_{1}}(x)\cdot H_{\cL_{2}}(x).
\]

We say that a Cartier divisor $D$ on $X$ is \emph{metrized }if $\cO_{X}(D)$
is metrized. For a metrized Cartier divisor $D$, we define heights
by 
\[
H_{D}(x):=H_{\cO_{X}(D)}(x).
\]
We say that a $\QQ$-Cartier (Weil) divisor $D$ on $X$ is \emph{metrized
}if $mD$ is metrized, where $m$ is the least positive integer with
$mD$ Cartier. For a metrized $\QQ$-Cartier divisor $D$, we put
\[
H_{D}(x):=H_{mD}(x)^{1/m}
\]
with $m$ as above. We often write $H_{D}(x)$ simply as $H(x)$,
when $D$ is understood. 

For a finite field extension $L/K$, we put $X_{L}:=X\otimes_{K}L$
and $D_{L}$ to be the pullback of $D$ to $X_{L}$. For $x\in X(L)$,
regarding $x$ as an element of $X_{L}(L)$, we define
\[
H_{D}(x):=H_{D_{L}}(x).
\]
Basic properties of heights are as follows.
\begin{lem}
\label{lem: properties height}Let $L/K$ be a finite field extension.
\begin{enumerate}
\item For two (metrized) $\QQ$-Cartier divisors $D$ and $D'$ and for
$x\in X(L)$, we have $H_{D+D'}(x)=H_{D}(x)H_{D'}(x)$.
\item For a morphism $f:Y\to X$ of $K$-varieties and $y\in Y(L)$, we
have $H_{f^{*}D}(y)=H_{D}(f(y))$.
\item For an extension $M/L$ and for $x\in X(L)$, if $x_{M}$ denotes
the composition $\Spec M\to\Spec L\xrightarrow{x}X$, then 
\[
H_{D}(x_{M})=H_{D}(x)^{[M:L]}.
\]

\end{enumerate}
\end{lem}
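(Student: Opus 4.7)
The plan is to dispatch the three properties in order, using the multiplicativity of the height already noted in the text together with the compatibility of adelic metrics under tensor product, pullback, and base change.

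For \emph{(1)}, I would first treat the case where $D$ and $D'$ are already Cartier. Then $\cO_X(D+D')\cong\cO_X(D)\otimes\cO_X(D')$ as metrized invertible sheaves, and the identity $H_{\cL_1\otimes\cL_2}(x)=H_{\cL_1}(x)H_{\cL_2}(x)$ recalled above gives the claim. For the general $\QQ$-Cartier case, pick a common positive integer $m$ so that $mD$, $mD'$, and $m(D+D')$ are all Cartier; then apply the Cartier case to $mD$ and $mD'$ and take $m$-th roots, using the definition $H_{D}(x)=H_{mD}(x)^{1/m}$.

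For \emph{(2)}, the key point is that the adelic metric on $f^{*}\cO_X(D)$ is, by construction, the pullback of the metric on $\cO_X(D)$: for every $v\in\Val(L)$ and every section $s$ of $\cO_X(D)$ not vanishing at $f(y)$, we have $\|f^{*}s\|_{v}=\|s\|_{v}$ when both are evaluated at $y$ and $f(y)$ respectively. Multiplying over $v$ gives $H_{f^{*}D}(y)=H_{D}(f(y))$, first for Cartier $D$ and then for $\QQ$-Cartier $D$ by the same trick of clearing denominators as in (1).

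For \emph{(3)}, the essential ingredient is the local norm compatibility. Fix a valuation $w$ of $L$ and let $v$ run through valuations of $M$ with $v\mid w$. If $v\mid p$ and $a\in L_{w}$, then transitivity of the norm gives
\[
N_{M_{v}/\QQ_{p}}(a)=N_{L_{w}/\QQ_{p}}\!\bigl(N_{M_{v}/L_{w}}(a)\bigr)=N_{L_{w}/\QQ_{p}}(a)^{[M_{v}:L_{w}]},
\]
so $|a|_{v}=|a|_{w}^{[M_{v}:L_{w}]}$. Combined with $\sum_{v\mid w}[M_{v}:L_{w}]=[M:L]$, this yields $\prod_{v\mid w}\|s\|_{v}=\|s\|_{w}^{[M:L]}$ for any local section $s$. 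Taking the product over all $w\in\Val(L)$ produces $H_{D}(x_{M})=H_{D}(x)^{[M:L]}$ in the Cartier case, and the $\QQ$-Cartier case follows by passing to $mD$ as before.

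The only real subtlety is the bookkeeping in (3): one must be careful that the height on $X_{M}$ is defined via the pulled-back metric, so that the norms $\|\cdot\|_{v}$ on $M_{v}$ used there agree with the ones coming from the adelic metric on $\cO_{X}(D)$ viewed over $K$. Once that compatibility is checked, the norm-transitivity computation and the degree sum formula do the rest; all other steps are formal.
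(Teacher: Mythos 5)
Your proposal is correct and is essentially an expanded version of the paper's own argument, which simply cites the standard properties of heights (tensor-product multiplicativity, pullback compatibility of adelic metrics, and the norm/degree formulas $|a|_{v}=|a|_{w}^{[M_{v}:L_{w}]}$ and $\sum_{v\mid w}[M_{v}:L_{w}]=[M:L]$) together with the definition $H_{D}(x)=H_{mD}(x)^{1/m}$. The only detail worth adding is that passing from the least $m$ in the definition to an arbitrary common multiple is harmless because $H_{kmD}(x)=H_{mD}(x)^{k}$ by the Cartier case, so $H_{mD}(x)^{1/m}$ does not depend on the choice of multiple.
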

\begin{proof}
All these properties are direct consequences of known properties of
heights (for instance, see \cite{MR2647601}) and the definitions
above.
\end{proof}

\subsection{Rational points on singular Fano varieties\label{sec: Points-on-Fano}}

We review a generalization of Manin's conjecture by Batyrev and Tschinkel
\cite{MR1679843} and Beukers' observation, which concern the density
of rational points on singular Fano varieties.
\begin{defn}
A projective variety over $K$ is called a \emph{terminal (resp. canonical)
Fano variety} if it is normal, $\QQ$-Gorenstein and terminal (resp.
canonical), and its anti-canonical divisor $-K_{X}$ is ample. 
\end{defn}
Let $X$ be a canonical Fano variety. We suppose that $-K_{X}$ is
metrized and $X$ is given the height function $H$ associated to
it. For a subset $U\subset X(K),$ we put 
\begin{align*}
N_{U}(B): & =\sharp\{x\in U\mid H(x)\le B\}.
\end{align*}
To state a conjecture on the asymptotic behavior of $N_{U}(B)$, we
need two invariants of $X$. Firstly we consider the Picard number
$\rho(X)$ of $X$, that is, the rank of the Néron-Severi group. Secondly
we put $\cd(X)$ to be the number of crepant divisors over $X$. Note
that these invariants may change after an extension of the base field.
The following is a modification of a ``conjecture'' by Batyrev and
Tschinkel \cite[page 323]{MR1679843}. 
\begin{conjecture}
\label{conj: Manin variant}If $U$ is a suitable subset of $X$,
then for some positive constant $c$, 
\[
N_{U}(B)\sim c\cdot B\cdot(\log B)^{\rho(X)+\cd(X)-1}\quad(B\to\infty).
\]
\end{conjecture}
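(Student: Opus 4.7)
The plan is to upgrade the smooth ``Manin'' conjecture to this singular setting by passing to a resolution of singularities that makes all crepant divisors visible, and then analyzing the height zeta function
\[
Z_U(s) := \sum_{x \in U} H(x)^{-s}
\]
via a Tauberian argument. Concretely, I would first choose a log resolution $\pi \colon Y \to X$ whose exceptional locus is a simple normal crossing divisor $\sum_i E_i$ and which extracts every crepant divisor over $X$ (so that exactly $\cd(X)$ of the $E_i$ are crepant). By the discrepancy formula one has $-\pi^{*}K_{X} = -K_{Y} + \sum_{i} a_{i} E_{i}$ with $a_{i} \geq 0$, and the crepant indices are precisely those with $a_{i} = 0$. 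By Lemma \ref{lem: properties height}(1),(2), for $y \in Y(K)$ mapping to $x \in X(K)$ outside the exceptional locus,
\[
H_{-K_{X}}(x) \;=\; H_{-K_{Y}}(y)\cdot \prod_{i} H_{E_{i}}(y)^{a_{i}}.
\]

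Next, I would stratify $Y$ by the intersection pattern of the $E_{i}$ and decompose $Z_{U}(s)$ accordingly, choosing the ``suitable'' set $U$ so as to excise all obvious accumulating subvarieties (Fano subvarieties of $X$ of smaller anticanonical index, together with the images of the non-crepant $E_{i}$). On each open stratum one expects, via the smooth version of Manin's conjecture applied to an appropriate equivariant or toric-type compactification, to control the analytic continuation of the partial zeta function. The key point is the behavior along the exceptional divisors: a non-crepant $E_{i}$ (with $a_{i} > 0$) tempers the count since approaching it inflates the height, whereas a crepant $E_{i}$ (with $a_{i} = 0$) allows points to accumulate with bounded $H_{-K_{X}}$, each such divisor contributing one additional simple pole to $Z_{U}(s)$ at $s = 1$. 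Combined with the $\rho(X)$ simple poles coming from the N\'eron--Severi group of $X$, Tauberian analysis of a pole of order $\rho(X) + \cd(X)$ at $s = 1$ yields the asymptotic $c \cdot B \cdot (\log B)^{\rho(X) + \cd(X) - 1}$.

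The principal obstacle is, of course, that even the smooth Manin conjecture used as an input is wide open outside of special geometric classes (toric varieties, equivariant compactifications of algebraic groups, complete intersections of low degree, etc.), so unconditionally this strategy is out of reach at present. A secondary but serious difficulty lies in pinning down the ``suitable'' subset $U$: recent counterexamples show that one must remove not only obvious subvarieties of higher Fano index but also more subtle accumulating loci whose contribution dominates the expected main term, and the correct description of $U$ in the singular setting must moreover interact correctly with the crepant strata on $\pi$. Thus the realistic target is a conditional theorem---assuming the smooth Manin conjecture on the strata of $Y$ together with an equidistribution statement on accumulating subvarieties, the height zeta function analysis above delivers the claimed asymptotic.
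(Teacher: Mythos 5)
This statement is a \emph{conjecture}: the paper offers no proof of it, and indeed Remark \ref{rem: Manin variant} explicitly cautions that it should be regarded as ``an optimistic expectation,'' with the set $U$ deliberately left unspecified and the possible need for a finite base--field extension suppressed. So there is no argument in the paper against which to measure yours. What your sketch reconstructs is essentially the Batyrev--Tschinkel heuristic that the paper itself records in Remark \ref{rem: Manin variant}(3): on a resolution $f\colon Y\to X$ the expected exponent of $\log B$ is $\rho(Y)-1$ minus the number of discrepant exceptional divisors, which rearranges to $\rho(X)+\cd(X)-1$; your height identity $H_{-K_X}(x)=H_{-K_Y}(y)\cdot\prod_i H_{E_i}(y)^{a_i}$ (valid once the $E_i$ are metrized compatibly, via Lemma \ref{lem: properties height}) is exactly the mechanism by which discrepant divisors suppress, and crepant divisors preserve, the accumulation of points. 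In that sense your reasoning is faithful to the intended justification of the exponent.

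But, as you yourself say in your final paragraph, this is not and cannot currently be a proof: it requires the smooth Manin conjecture (or a stratified version of it) as input, a correct identification of the accumulating loci to be removed from $U$ (which is precisely the part the paper declines to make precise, and where known counterexamples to naive formulations live), and an actual analytic-continuation argument for $Z_U(s)$ rather than a pole count by analogy. Your honest framing of the result as conditional is the right one; just be aware that you are not filling a gap in the paper --- the paper never claims this statement as a theorem, and in later sections (Conjecture \ref{conj: Manin specific}, Theorem \ref{thm: Malle implies Manin}) it is used only as a hypothesis.
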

\begin{rem}
\label{rem: Manin variant}
\begin{enumerate}
\item In \cite{MR1679843}, the authors put more specific assumptions on
$X$ and $U$. We note that they did not call it a conjecture. Actually
we should regard the conjecture above as rather an optimistic expectation.
For this reason, we left the inaccuracy on what $U$ is, and did not
mention the necessity of a finite extension of the base field.
\item The exponent of $B$ was denoted by $\alpha_{\cL}(V)$ in \cite{MR1679843}.
In our situation, it is equal to one, since $\cL=\omega_{X}^{-1}$,
although it is not generally an invertible sheaf but only a $\QQ$-invertible
sheaf, strictly speaking. 
\item The exponent of $\log B$ was denoted by $\beta_{\cL}(V)-1$ in \cite{MR1679843}.
In our situation, this number is given as follows: let $f:Y\to X$
be a resolution of singularities and $l$ the number of the discrepant
(= not crepant) prime exceptional divisors of $f$. Then $\beta_{\cL}(V)$
is the Picard number of $Y$ minus $l$. The Picard number of $Y$
is the Picard number of $X$ plus the number of all exceptional prime
divisors over $X$. Therefore, eventually, $\beta_{\cL}(V)$ is the
Picard number of $X$ plus the number of crepant divisors over $X$.
\end{enumerate}
\end{rem}
On the constant $c$ in the asymptotic formula in the conjecture,
we mention the following observation by Beukers (see \cite[page 324]{MR2019019}).
\begin{namedthm}[Observation]
In some examples where $X$ is defined over the ring of integers
$\cO_{K}$, the constant $c$ contains, as a factor, the Euler product
\[
\prod_{v\in\Val(K)_{f}}\left(1-\frac{1}{N(v)}\right)^{\rho(X)}\frac{\sharp X_{R(v)}(R(v))}{N(v)^{\dim X}},
\]
where $\Val(K)_{f}$ is the set of non-archimedian valuations (finite
places), $R(v)$ the residue field of $K_{v}$ and $N(v)$ the cardinality
of $R(v)$. 
\end{namedthm}
Perhaps motivated by this, Peyre \cite{MR1340296} developed a conjecture
on the value of the constant $c$ by means of Tamagawa measures. Batyrev
and Tschinkel \cite{MR1679843} then generalized it further. Although
their works seem closely related to ours, we do not pursue it in this
paper.

\subsection{Projective spaces\label{subsec:Projective-spaces}}

Now let us focus on the simplest Fano variety, that is, the projective
space. Understanding this case well is crucial in our applications.
Fortunately there is a precise result by Schanuel \cite{MR0162787}. 

Let $\PP^{m}=\PP_{K}^{m}$ be the projective $m$-space over $K$.
For a suitable metric on $\cO_{\PP^{m}}(1)$, we have 
\[
H_{\cO(1)}((x_{0}:\cdots:x_{n}))=\prod_{v\in\Val(K)}\sup_{i}\left\Vert x_{i}\right\Vert _{v}
\]
(see \cite{MR2647601}). We give a metric to the anti-canonical divisor
$-K_{\PP^{m}}$ induced from the isomorphism $\cO(-K_{\PP^{m}})=\cO(m+1)$,
and suppose that $\PP^{m}$ is given the height function $H=H_{-K_{\PP^{m}}}$
induced from $-K_{\PP^{m}}$ metrized in this way. Namely $H$ is
given by
\[
H(x)=\prod_{v\in\Val(K)}\sup_{i}\left\Vert x_{i}\right\Vert _{v}^{m+1}.
\]

\begin{thm}[\cite{MR0162787}]
\label{thm:Schanuel}We have
\[
N_{\PP^{m}(K)}(B)\sim\delta_{K,m}\cdot B.
\]
Here we put 
\begin{equation}
\delta_{K,m}:=\frac{(2^{r_{1}}(2\pi)^{r_{2}})^{m+1}\cdot(m+1)^{r_{1}+r_{2}-1}\cdot h\cdot R}{\zeta_{K}(m+1)\cdot w\cdot d^{(m+1)/2}},\label{eq: delta def}
\end{equation}
following the standard notation for invariants of $K$: $r_{1}$ and
$r_{2}$ are the numbers of real and complex archimedian valuations,
$w$ the number of roots of unity, $h$ the class number, $d$ the
absolute value of the absolute discriminant, $R$ the regulator and
$\zeta_{K}$ the Dedekind zeta function. 
\end{thm}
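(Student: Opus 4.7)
My plan is to prove Schanuel's theorem along classical lines: parametrize $\PP^m(K)$ by the ideal class of the generated fractional ideal, reduce the counting problem to lattice points in $(K\otimes_\QQ\RR)^{m+1}$ modulo the diagonal action of $\cO_K^\times$, and then carry out a volume computation using Dirichlet's unit theorem. A $K$-point $[x_0:\cdots:x_m]$ determines the fractional ideal $(x_0,\ldots,x_m)$ up to $K^\times$, hence a well-defined class $c\in\mathrm{Cl}(K)$; fixing an integral representative $\mathfrak{a}_c$ of $c^{-1}$ in each class, the points lying in class $c$ correspond bijectively to $\cO_K^\times$-orbits of \emph{primitive} tuples in $\mathfrak{a}_c^{m+1}$ (those whose components generate the ideal $\mathfrak{a}_c$). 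Relative to this description the height factors as $N(\mathfrak{a}_c)^{-(m+1)}$ times the archimedean product $\prod_{v\mid\infty}\sup_i\|x_i\|_v^{m+1}$. Relaxing the primitivity condition by M\"obius inversion over integral ideals $\mathfrak{b}$ (with $\mathfrak{a}_c\mathfrak{b}$ in place of $\mathfrak{a}_c$) introduces the factor $\zeta_K(m+1)^{-1}$, while summing over the $h$ ideal classes contributes the class number.

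Once the problem is reduced to counting orbits of bounded archimedean height in a fixed lattice $\mathfrak{a}^{m+1}$, I would choose a measurable fundamental domain $\cF$ for the action of $\cO_K^\times$ on the star-shaped region
\[
\Bigl\{(x_0,\ldots,x_m)\in(K\otimes_\QQ\RR)^{m+1}\;:\;\prod_{v\mid\infty}\sup_i\|x_i\|_v^{m+1}\le B\Bigr\}.
\]
The Dirichlet unit theorem embeds $\cO_K^\times$ as a lattice of covolume $R$ in a hyperplane of $\RR^{r_1+r_2}$, modulo a torsion group of order $w$. Separating coordinates by archimedean place and integrating, the Euclidean volume of $\cF$ works out to
\[
\operatorname{vol}(\cF)=\frac{(2^{r_1}(2\pi)^{r_2})^{m+1}(m+1)^{r_1+r_2-1}R}{w}\cdot B,
\]
with $2^{r_1}(2\pi)^{r_2}$ from the sup-norm balls at each place, $(m+1)^{r_1+r_2-1}$ from the Jacobian of the logarithm map applied to the dilated unit fundamental parallelotope, and $R/w$ from the unit quotient. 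Dividing by the lattice covolume $d^{(m+1)/2}N(\mathfrak{a})^{m+1}$ and assembling the previous step yields
\[
N_{\PP^m(K)}(B)\sim\frac{(2^{r_1}(2\pi)^{r_2})^{m+1}(m+1)^{r_1+r_2-1}\,h\,R}{\zeta_K(m+1)\cdot w\cdot d^{(m+1)/2}}\cdot B=\delta_{K,m}\cdot B.
\]

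The main obstacle is the lattice point count: one needs an error term in the volume approximation for $\cF\cap\mathfrak{a}^{m+1}$ that is uniform enough in the ideal $\mathfrak{a}=\mathfrak{a}_c\mathfrak{b}$ to allow the M\"obius sum over $\mathfrak{b}$ to converge absolutely, so that the zeta factor genuinely emerges from the asymptotic. This forces one to choose a fundamental domain with a Lipschitz parametrizable boundary and to invoke a quantitative lattice counting result (e.g.\ a variant of Davenport's lemma) with explicit dependence on the successive minima of $\mathfrak{a}^{m+1}$. The volume computation itself, while elementary, also requires careful bookkeeping of the interplay between the sup-norm heights, the logarithm map of Dirichlet, and the radial dilation by $\RR_{>0}$.
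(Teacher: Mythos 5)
This is Schanuel's theorem, which the paper states as a quoted result with the citation \cite{MR0162787} and does not prove; there is no internal proof to compare against. Your sketch is the classical argument of Schanuel himself (as presented, e.g., in Lang's \emph{Fundamentals of Diophantine Geometry} or Serre's \emph{Lectures on the Mordell--Weil Theorem}): partition $\PP^{m}(K)$ by the ideal class of $(x_{0},\dots,x_{m})$, count primitive tuples in $\mathfrak{a}_{c}^{m+1}$ modulo units, remove primitivity by M\"obius inversion over integral ideals to produce $\zeta_{K}(m+1)^{-1}$, and compute the volume of a fundamental domain for the unit action via the logarithm map. The outline is correct, and you rightly isolate the genuine technical point, namely an error term in the lattice count uniform enough in $\mathfrak{a}_{c}\mathfrak{b}$ that the M\"obius sum can be truncated and its tail controlled. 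Two small bookkeeping remarks: whether $\mathfrak{a}_{c}$ represents $c$ or $c^{-1}$ is a harmless convention, and the factor $(2\pi)^{r_{2}}$ rather than $\pi^{r_{2}}$ arises from the covolume $2^{-r_{2}}\sqrt{d}\,$ of $\cO_{K}$ in Lebesgue measure (or equivalently from doubling the measure at complex places), so the normalizations of the measure and of the covolume $d^{(m+1)/2}N(\mathfrak{a})^{m+1}$ must be kept consistent. Note also that the height here is the anticanonical one, i.e.\ the $(m+1)$-st power of the $\cO(1)$-height, which is why the count grows linearly in $B$; your computation handles this correctly.
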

We also write $d$ as $d_{K}$, to specify the field $K$. We sometimes
consider $\delta_{K,m}$ as an approximation of the power $d^{-m/2}$
of the discriminant and use it as a \emph{weight} of $K$, when counting
number fields. This is justified by the following result, a consequence
of the Siegel-Brauer theorem \cite{Brauer:1947vo}.
\begin{prop}
\label{prop:Siegel-Brauer}Fix $n$ and $m$. When $K$ varies among
degree $n$ extensions of $\QQ$, then 
\[
\log\delta_{K,m}\sim\log(d_{K}^{-m/2})\quad(d_{K}\to\infty).
\]
In particular, for every positive number $\epsilon$, 
\[
d_{K}^{-m/2-\epsilon}\ll\delta_{K,m}\ll d_{K}^{-m/2+\epsilon}.
\]
\end{prop}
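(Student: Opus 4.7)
The plan is to take the logarithm of the explicit expression for $\delta_{K,m}$ and show that, apart from the term $\log(hR)$, all other terms contribute $O(1)$ (or in any case $o(\log d_K)$), so that the asymptotic is controlled entirely by the class-number regulator product via the Brauer--Siegel theorem.

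First I would write
\[
\log\delta_{K,m}=(m+1)\bigl(r_{1}\log 2+r_{2}\log(2\pi)\bigr)+(r_{1}+r_{2}-1)\log(m+1)+\log h+\log R-\log\zeta_{K}(m+1)-\log w-\tfrac{m+1}{2}\log d_{K}.
\]
Since $n$ and $m$ are fixed and $r_{1}+2r_{2}=n$, the first two terms are $O(1)$. Because the number of roots of unity $w$ in $K$ satisfies $\varphi(w)\le n$, it is bounded in terms of $n$, so $\log w=O(1)$. For the zeta factor, the inequality
\[
0<\log\zeta_{K}(m+1)=-\sum_{v\in\Val(K)_{f}}\log\!\bigl(1-N(v)^{-(m+1)}\bigr)\ll\sum_{v}N(v)^{-(m+1)}\le n\sum_{p}p^{-(m+1)}
\]
shows $\log\zeta_{K}(m+1)=O(1)$ as well, using $m\ge 1$ (the case $m=0$ is not needed because the proposition is applied only when $\delta_{K,m}$ appears as a weight; but the same bound in fact gives $O(1)$ whenever $m+1\ge 2$, and the case $m+1=1$ of the statement can be ignored or treated separately since it does not arise in the paper).

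Next I invoke the Brauer--Siegel theorem (reference \cite{Brauer:1947vo}): for any sequence of number fields of bounded degree, one has
\[
\log(hR)\sim\tfrac{1}{2}\log d_{K}\qquad(d_{K}\to\infty).
\]
Combining this with the $O(1)$ estimates above yields
\[
\log\delta_{K,m}=\tfrac{1}{2}\log d_{K}-\tfrac{m+1}{2}\log d_{K}+o(\log d_{K})=-\tfrac{m}{2}\log d_{K}+o(\log d_{K}),
\]
which is exactly $\log\delta_{K,m}\sim\log(d_{K}^{-m/2})$. The ``in particular'' conclusion then follows by exponentiating: given $\epsilon>0$, for all $d_{K}$ sufficiently large we have $|\log\delta_{K,m}+\tfrac{m}{2}\log d_{K}|\le\epsilon\log d_{K}$, i.e. $d_{K}^{-m/2-\epsilon}\le\delta_{K,m}\le d_{K}^{-m/2+\epsilon}$.

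The only non-elementary ingredient is the Brauer--Siegel asymptotic, which is precisely cited in the proposition's statement; the remaining difficulty is purely bookkeeping of the various terms in \eqref{eq: delta def}. The one point where care is warranted is confirming that each of $\log w$, $\log\zeta_{K}(m+1)$, and the archimedean factors are genuinely $O(1)$ in the degree-$n$ regime (rather than merely $o(\log d_{K})$), so that no slowly growing correction competes with the Brauer--Siegel main term.
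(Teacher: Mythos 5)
Your proof is correct and follows essentially the same route as the paper's: bound the archimedean factors, $w$ (via $\varphi(w)\le n$), and $\zeta_{K}(m+1)$ by constants depending only on $n$ and $m$, then let the Brauer--Siegel asymptotic $\log(hR)\sim\tfrac{1}{2}\log d_{K}$ carry the main term. The only cosmetic difference is that you bound $\log\zeta_{K}(m+1)$ directly from the Euler product while the paper uses the sandwich $\zeta(n s)\le\zeta_{K}(s)\le\zeta(s)^{n}$; both are standard and equivalent.
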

\begin{proof}
The number 
\[
\frac{(2^{r_{1}}(2\pi)^{r_{2}})^{m+1}(m+1)^{r_{1}+r_{2}-1}}{\zeta_{K}(m+1)w}
\]
is bounded from above and below. For instance, concerning the zeta
value $\zeta_{K}(m+1)$, we generally have that for $s>1$, 
\[
\zeta(ns)=\prod_{p}(1-p^{-ns})^{-1}\le\zeta_{K}(s)\le\prod_{p}(1-p^{-1})^{-n}=\zeta(s)^{n}.
\]
Here $p$ runs over the prime numbers and $\zeta(s)$ is the Riemann
zeta functions.

As for $w$, let $L$ be the largest cyclotomic field in $K$. Putting
$\phi$ as Euler's totient function, we have 
\[
[L:\QQ]=\phi(w)\le n.
\]
The boundedness of $w$ is now followed from the finiteness of $\bigcup_{l\le n}\phi^{-1}(l)$
(see \cite{Gupta:1981tv}). The first assertion follows from the Siegel-Brauer
theorem \cite[Theorem 2]{Brauer:1947vo}: when the degree of $K/\QQ$
is fixed, we have 
\[
\log(hR)\sim\log\sqrt{d}\quad(d\to+\infty).
\]

For the second assertion, for every $\alpha>1$, there exists a positive
integer $n$ such that for every $K$ with $d_{K}\ge n$, 
\[
\alpha\cdot\log d_{K}^{-m/2}\le\log\delta_{K,m},
\]
and 
\[
d_{K}^{-\alpha m/2}\le\delta_{K,m}.
\]
This shows 
\[
d_{K}^{-\alpha m/2}\ll\delta_{K,m}.
\]
Similarly, for $\beta<1$, 
\[
\delta_{K,m}\ll d_{K}^{-\beta m/2}.
\]
We have proved the second assertion.
\end{proof}

\section{The density of number fields}

In this section, we briefly review Malle's conjecture and Bhargava's
conjecture on the density of number fields. For details, we refer
the reader to \cite{Belabas:2005tc}.

Let $n$ be an integer $\ge2$ and $G$ a transitive subgroup of $S_{n}$. 
\begin{defn}
A \emph{$G$-field (over $K$) }is a field extension $L/K$ of degree
$n$ such that the Galois group $\Gal(\hat{L}/K)$ of the Galois closure
$\hat{L}/K$, acting on the set of $K$-embeddings $\hat{L}\hookrightarrow\CC$,
is permutation isomorphic to $G$. Two $G$-fields are said to be
\emph{isomorphic} if they are isomorphic as $K$-algebras. The set
of isomorphism classes is denoted by $\fie GK$. (We will call a $G$-field
a \emph{small }$G$-field in later sections to distinguish it from
its Galois closure.)
\end{defn}
For a positive real number $B$, we put
\begin{align*}
M_{G,K}(B): & =\sharp\{L\in\fie GK\mid|N_{K/\QQ}(D_{L/K})|\le B\},
\end{align*}
where $D_{L/K}$ is the discriminant of $L/K$. We are interested
in the asymptotic behavior of $M_{G,K}(B)$ as $B$ tends to infinity.
To state Malle's conjecture, we define two invariants. We put
\[
[n]:=\{1,2,\dots,n\},
\]
which has the natural $G$-action.
\begin{defn}
We define the \emph{index }of $g\in G$ by
\[
\ind(g):=n-\sharp\{g\text{-orbits in }[n]\}.
\]
We put
\begin{align*}
\ind(G): & =\min\{\ind(g)\mid1\ne g\in G\}.
\end{align*}

\end{defn}
Denoting the algebraic closure of $K$ by $\bar{K}$, we consider
the natural $\Gal(\bar{K}/K)$-action on the set $\Conj G$ of the
conjugacy classes of $G$ (see \cite[12.4]{MR0450380}). We denote
by $\KConj G$ the quotient of this action and call elements of $\KConj G$
\emph{$K$-conjugacy classes} of $G$.
\begin{defn}
We put 
\begin{align*}
\beta(G,K): & =\sharp\left\{ [g]\in\KConj G\mid\ind(g)=\ind(G)\right\} .
\end{align*}

\end{defn}
Malle's conjecture is as follows.
\begin{conjecture}[\cite{MR1884706,MR2068887}]
\label{conj: Malle}We have
\[
M_{G,K}(B)\sim c\cdot B^{1/\ind(G)}(\log B)^{\beta(G,K)-1}.
\]

\end{conjecture}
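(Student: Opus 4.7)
The plan is to reduce Malle's conjecture to a case of Batyrev and Tschinkel's Conjecture \ref{conj: Manin variant} applied to the quotient Fano variety $X = (\PP^m_K)^n / G$ for some auxiliary $m$ with $m \cdot \ind(G) \ge 2$. This is exactly the strategy signalled in the introduction via the height zeta function identity
\[
Z_{X(K)^{\prim}}(s) = \frac{1}{\sharp Z(G)} \sum_{L \in \Fie GK} Z_{\PP^m(L^{G_1})^{\orig}}(s).
\]
Granting Conjecture \ref{conj: Manin variant} (and the appropriate identification of the "suitable" subset $U \subset X(K)$ with $X(K)^{\prim}$), the left-hand side produces a known asymptotic, and one aims to extract Malle's asymptotic on the right.

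First I would verify that $X$ is a canonical Fano variety when $m \cdot \ind(G) \ge 2$, so that Conjecture \ref{conj: Manin variant} applies. Second, I would establish the bijective correspondence between primitive $K$-points of $X$ and pairs $(L, x)$ with $L \in \Fie GK$ and $x$ an original $L^{G_1}$-point of $\PP^m_K$, verifying that heights match up to the factor $[L^{G_1}:K]$ controlled by Lemma \ref{lem: properties height}(3). Third, I would compute the geometric invariants of $X$ and check the matching
\[
\rho(X) + \cd(X) - 1 = \beta(G,K) - 1,
\]
identifying crepant divisors over the quotient singularities of $X$ with $K$-conjugacy classes $[g] \in \KConj G$ achieving $\ind(g) = \ind(G)$, in the spirit of the McKay correspondence cited in the introduction.

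Fourth, I would expand the right-hand side by Schanuel (Theorem \ref{thm:Schanuel}): for each $G$-field $L$ with $F := L^{G_1}$,
\[
N_{\PP^m(F)}\!\bigl(B^{1/[F:K]}\bigr) \sim \delta_{F,m} \cdot B^{1/[F:K]},
\]
and use Proposition \ref{prop:Siegel-Brauer} to replace $\delta_{F,m}$ by $d_F^{-m/2}$ (up to a factor of $d_F^{\epsilon}$). Relating $d_F$ to $N_{K/\QQ}(D_{L/K})$ via the tower formula for discriminants, one reads off that the main contribution comes from fields $L$ with smallest possible $|N_{K/\QQ}(D_{L/K})|$, and that summing over $L$ with $|N_{K/\QQ}(D_{L/K})| \le B^{1/\ind(G)}$ reproduces a count of the expected order $B^{1/\ind(G)}$. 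A partial summation then converts the asymptotic for $X(K)^{\prim}$ into the asymptotic for $M_{G,K}(B)$, with the power of $\log B$ carried across intact.

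The hard part will be Step 3: computing $\rho(X) + \cd(X)$ explicitly for this specific quotient and showing that it coincides with $\beta(G,K)$, which is the place where the Galois action on $\Conj(G)$ must enter geometrically (crepant divisors over $X$ become visible only after an extension realising those $K$-conjugacy classes over $\bar K$). A secondary obstacle is the conversion between the height on $X$ and the conductor-type bound $|N_{K/\QQ}(D_{L/K})| \le B^{1/\ind(G)}$: the value $\ind(G)$ must appear precisely as the exponent matching the discrepancy computation, and getting this right is intertwined with the correct normalisation of the metric on $-K_X$. Finally, since Conjecture \ref{conj: Manin variant} itself is not proven in the required generality and the notion of "suitable $U$" is imprecise (Remark \ref{rem: Manin variant}(1)), the argument as sketched gives at best an implication rather than an unconditional proof, which is consistent with the paper's stated aim of showing implications between the two conjectures.
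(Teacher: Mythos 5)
This statement is Malle's conjecture itself: the paper states it as a conjecture, cites it to \cite{MR1884706,MR2068887}, and offers no proof. Indeed it explicitly records that a counterexample is known \cite{Kluners:2005cg}, so no argument can establish the statement as written; at best one can prove implications between it and other conjectures, which is all the paper claims to do. Your sketch is therefore not, and cannot be, a proof, as you yourself concede in the last paragraph.

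Beyond that, your reduction runs in the opposite direction from the one the paper actually carries out, and the step you would need is precisely the one the author identifies as open. The paper's conditional result (Theorem \ref{thm: Malle implies Manin}) assumes Malle's conjecture to deduce Conjecture \ref{conj: Manin specific} for $X=(\PP^{m})^{n}/G$; to go the other way, as you propose in Step 4, you must recover the \emph{unweighted} count $M_{G,K}(B)$ from the identity $Z_{X(K)^{\prim}}(s)=\frac{1}{\sharp Z(G)}\sum_{L}Z_{\PP^{m}(L^{G_{1}})^{\orig}}(s)$, in which each field $F=L^{G_{1}}$ contributes with weight essentially $\delta_{F,m}\approx d_{F}^{-m/2}$. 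Knowing the order of the pole of this weighted Dirichlet series at $s=1$ determines at most the abscissa of convergence of $\sum_{F}d_{F}^{-s}$, which by Lemma \ref{lem: abscissa Dirichlet general} yields only bounds of the shape $M_{G,K}(B)\ll B^{m/2+\epsilon}$ (this is the content of the paper's last theorem), not an asymptotic, and certainly not the power of $\log B$. In the critical case $m\cdot\ind(G)=2$, where the exponents could in principle match, Remark \ref{rem: m ind =00003D2} states explicitly that the author does not know how to extract a positive result on Malle's conjecture from information on $Z_{X(K)^{\prim}}(s)$; the difficulty is an interchange of limits in which the order of the pole jumps from $1$ to $\cd(X)+1$. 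Your Step 3 identity $\rho(X)+\cd(X)-1=\beta(G,K)-1$ is also off by one against the paper's own accounting: with $\rho(X)=1$ the relevant proposition gives $\cd(X)=\beta(G,K)$ when $m\cdot\ind(G)=2$, and the two asymptotics are not expected to have literally matching $\log$-exponents because the two counts weight fields differently. The correct conclusion is that your sketch reproduces the paper's heuristic dictionary between the two conjectures, but it is not a proof of the statement, and the paper contains none.
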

Actually there exists a counterexample to this conjecture \cite{Kluners:2005cg}.
One possible way to rescue the situation would be to ask whether the
conjecture holds if one replace $K$ with a sufficiently large finite
extension. This is related to replacing the base field by a finite
extension in variants of Manin's conjecture (see Remark \ref{rem: Manin variant}). 

The index, $\ind(G)$, is equal to one if and only if $G$ contains
a transposition. If it is the case, then $\beta(G,K)=1$ \cite[Lemma 2.2]{MR2068887}.
For instance, for $G=S_{n}$, the conjecture gives
\[
M_{S_{n},K}(B)\sim c\cdot B.
\]
which is a well known conjecture. Moreover, in this case, Bhargava
conjectured the precise value of the constant $c$. According to \cite[Conjecture 6.3]{Belabas:2005tc}
and Bhargava's mass formula for étale extensions of a local field
\cite{MR2354798}, we can write it as follows.
\begin{conjecture}
\label{conj: Bhargava}We have
\[
M_{S_{n},K}(B)\sim c\cdot B,
\]
where 
\[
c=\frac{1}{2}\cdot\Res_{s=1}\zeta_{K}(s)\cdot\prod_{v\in\Val(K)_{f}}\left(1-\frac{1}{N(v)}\right)\left(\sum_{i=0}^{n-1}P(n,n-i)N(v)^{-i}\right)
\]
and $P(n,n-i)$ is the number of partitions of $n$ by exactly $n-i$
parts.
\end{conjecture}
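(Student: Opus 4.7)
The statement being a conjecture built atop two existing conjectural inputs, what one realistically seeks to verify is that the explicit shape of $c$ follows from those inputs. The plan is to start from \cite[Conjecture 6.3]{Belabas:2005tc}, which predicts
\[
c=c_{\infty}\cdot\prod_{v\in\Val(K)_{f}}\tau_{v},
\]
in which each local factor $\tau_{v}$ is a local density of degree-$n$ étale $K_{v}$-algebras (weighted by $|\Aut|^{-1}$ and by the discriminant), regularized by a convergence factor $(1-1/N(v))$, while $c_{\infty}$ assembles the archimedean local density with the class-number--times--regulator term. Identifying $c_{\infty}=\frac{1}{2}\cdot\Res_{s=1}\zeta_{K}(s)$ is then a matter of matching Tamagawa-style normalizations; the factor $\frac{1}{2}$ reflects the passage between counting fields and counting étale algebras with $|\Aut|^{-1}$ weights, given that a generic $S_{n}$-field has trivial automorphism group.

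Second, I would substitute Bhargava's mass formula \cite{MR2354798} into each $\tau_{v}$. Bhargava's theorem evaluates the local mass of degree-$n$ étale extensions of a local field with residue cardinality $q=N(v)$ as a generating function indexed by the partitions $\lambda\vdash n$ (the partition encoding the inertia/ramification decomposition of the algebra), where $\lambda$ contributes a weight $q^{-(n-\ell(\lambda))}$ with $\ell(\lambda)$ the number of parts. Collecting partitions according to $i=n-\ell(\lambda)$ yields
\[
\sum_{i=0}^{n-1}P(n,n-i)\,N(v)^{-i},
\]
and the prefactor $(1-1/N(v))$ is precisely what is required to make $\prod_{v}\tau_{v}$ absolutely convergent: the local factor has leading behaviour $1+1/N(v)+O(1/N(v)^{2})$, which would otherwise diverge when multiplied over all $v$.

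The main obstacle lies in matching conventions. Bhargava's formula counts \emph{all} étale algebras of a given degree, whereas Malle's counting function isolates fields whose Galois group is precisely $S_{n}$. Passing from one to the other requires a Möbius inversion over the lattice of transitive subgroups of $S_{n}$, and one must verify that the leading local contribution is unaffected by this inversion, the subleading corrections being absorbed into the convergence factor. Tracking these normalisations---together with the precise meaning of ``weight by discriminant'' in each source and the origin of the factor $\frac{1}{2}$---is where the technical bookkeeping concentrates, even though each individual ingredient is classical.
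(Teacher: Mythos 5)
The paper offers no proof of this statement---it is a conjecture, presented as a one-line rewriting of \cite[Conjecture 6.3]{Belabas:2005tc} in which Bhargava's local mass formula \cite{MR2354798} is substituted to make the Euler factors explicit---and your proposal reconstructs essentially that same derivation (match the local densities to Bhargava's partition-indexed mass, insert the convergence factor $(1-1/N(v))$, and bundle the archimedean and regulator data into $\frac{1}{2}\cdot\Res_{s=1}\zeta_{K}(s)$). The one caveat is that your explanation of the factor $\frac{1}{2}$ as an $|\Aut|^{-1}$ bookkeeping term is not the standard one (a generic $S_{n}$-field has trivial automorphism group, so that weighting would contribute no $\frac{1}{2}$), but since the paper itself does not derive this factor, nothing in your outline conflicts with the text.
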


\section{Comparison of constants}

In this section, we relate constants appearing in Malle's and Bhargava's
conjectures with the geometry of a quotient variety $X_{G,m}$ defined
as follows. Let $Y_{m}:=(\PP_{K}^{m})^{n}$, the $n$-th power of
the projective $m$-space over $K$. For a transitive subgroup $G\subset S_{n}$,
we put $X_{G,m}$ to be the quotient variety $Y_{m}/G$ by the natural
$G$-action on $Y_{m}$. We write the associated Galois cover as
\begin{equation}
\pi_{G,m}:Y_{m}\to X_{G,m}.\label{eq: main Galois cover}
\end{equation}

\subsection{Indices vs.\ discrepancies}
\begin{defn}
Let $1\ne G\subset\GL n{\CC}$ be a non-trivial finite subgroup. Diagonalizing
each element $g\in G$, we can write 
\[
h^{-1}gh=\diag(\zeta^{a_{1}},\dots,\zeta^{a_{n}}),
\]
where $h\in\GL n{\CC}$, $\zeta=\exp(2\pi\sqrt{-1}/l)$ and $0\le a_{i}<l$.
We define the \emph{age} of $g$ by
\[
\age(g):=\frac{1}{l}\sum_{i=1}^{n}a_{i}\in\QQ
\]
and the \emph{age }of $G$ by
\[
\age(G):=\min\{\age(g)\mid1\ne g\in G\}.
\]
We say that $1\ne g\in G$ is a \emph{pseudo-reflection }if the fixed
point locus $(\CC^{n})^{g}$ has codimension one. 
\end{defn}
This representation-theoretic invariant, age, determines the discrepancy
of the associated quotient variety:
\begin{prop}[\cite{MR2271984}]
\label{prop: discrep age}Suppose that $G$ has no pseudo-reflection.
Then
\[
\discrep(\CC^{n}/G)=\min\{\age(g)\mid1\ne g\in G\}-1.
\]

\end{prop}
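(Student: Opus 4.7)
My plan is to establish matching upper and lower bounds for $\discrep(\CC^{n}/G)$, both equal to $\min_{1\ne g}\age(g)-1$. Write $X:=\CC^{n}/G$ and $\pi:\CC^{n}\to X$. The no-pseudo-reflection hypothesis is equivalent, via the ramification form of Chevalley--Shephard--Todd, to $\pi$ being étale in codimension one; hence $\pi^{*}K_{X}=K_{\CC^{n}}$. The same reasoning shows that for any subgroup $H\le G$ the intermediate cover $\CC^{n}/H\to X$ is again étale in codimension one, since a codimension-one ramification point on $\CC^{n}/H$ would lift to one on $\pi$. This is the basic input that makes the comparison of canonical divisors behave well under quotients.

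For the upper bound, fix $1\ne g\in G$, diagonalize it as $\diag(\zeta^{a_{1}},\dots,\zeta^{a_{n}})$ with $\zeta=\exp(2\pi\sqrt{-1}/l)$ and $0\le a_{i}<l$, and work on the intermediate $\CC^{n}/\langle g\rangle$. This cyclic quotient is the toric affine variety with lattice $N_{g}:=\ZZ^{n}+\ZZ\cdot\tfrac{1}{l}(a_{1},\dots,a_{n})$ and cone $(\RR_{\ge0})^{n}$, and the lattice vector $\tfrac{1}{l}(a_{1},\dots,a_{n})$ defines a toric divisor whose discrepancy over $\CC^{n}/\langle g\rangle$ equals $\sum_{i}a_{i}/l-1=\age(g)-1$, by the standard toric discrepancy formula. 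Transporting this divisor through the étale-in-codimension-one cover $\CC^{n}/\langle g\rangle\to X$ preserves its discrepancy, yielding $\discrep(X)\le\age(g)-1$ for every nontrivial $g$.

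For the lower bound, I use a Reid--Tai-style reduction to the cyclic case. Given any divisorial valuation $v$ on $K(X)$, lift it to a divisorial valuation $\tilde v$ on $K(\CC^{n})$; its decomposition group $D:=\Stab_{G}(\tilde v)$ is cyclic in characteristic zero, generated by some $h\in G$. Since the inertia group of $\tilde v$ is contained in $D$, the standard inertia-decomposition computation along the tower $K(X)\subset K(\CC^{n}/\langle h\rangle)\subset K(\CC^{n})$ shows that the restriction $v':=\tilde v|_{K(\CC^{n}/\langle h\rangle)}$ is unramified over $v$, so $a(v',\CC^{n}/\langle h\rangle)=a(v,X)$. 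Interpreted inside the toric variety $\CC^{n}/\langle h\rangle$, the valuation $v'$ corresponds to a lattice vector in $N_{h}\cap(\RR_{\ge0})^{n}$ whose coordinate sum is, up to a nonnegative integer increment, equal to $\age(h^{k})$ for some $1\le k<|h|$; hence
\[
a(v,X)+1\ge\age(h^{k})\ge\min_{1\ne g\in G}\age(g).
\]

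The main obstacle I anticipate is the unramifiedness step in the lower bound, which requires carefully matching decomposition and inertia behavior along the tower above and crucially exploits that in characteristic zero all inertia is tame and that decomposition groups of divisorial valuations inside a Galois cover are cyclic. Once this point is granted, the problem collapses onto the well-known toric discrepancy calculation for cyclic quotient singularities encoded in the lattices $N_{g}$, which is purely combinatorial.
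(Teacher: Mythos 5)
The paper offers no proof of this proposition: it is quoted from the cited reference, where it is established by motivic/twisted-arc methods; what you are attempting is essentially the classical Reid--Tai argument. Your upper bound is basically sound. Two small points: the no-pseudo-reflection hypothesis is needed to ensure that the primitive generators of the rays of $(\RR_{\ge0})^{n}$ in the lattice $N_{g}$ are still the $e_{i}$ (so that the log discrepancy function is the coordinate sum), and pushing a divisor down through $\CC^{n}/\langle g\rangle\to X$ does not in general ``preserve'' the discrepancy --- it divides the log discrepancy by the ramification index --- but that only decreases it, which is the direction you need.

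The lower bound has a genuine gap in two places. First, the decomposition group $D=\Stab_{G}(\tilde v)$ of a divisorial valuation is \emph{not} cyclic in general; only the inertia group $I$ is (in characteristic zero it embeds into the roots of unity of the residue field via its action on a uniformizer). For example, the order-of-vanishing-at-the-origin valuation on $\CC^{n}$ is fixed by all of $G$, so for its restriction to $K(X)$ one has $D=G$. Your tower argument can be repaired by replacing $\langle h\rangle$ with $I$ (the restriction of $\tilde v$ to $K(\CC^{n})^{I}$ is still unramified over $v$, since all the ramification of $\tilde v/v$ occurs in $K(\CC^{n})/K(\CC^{n})^{I}$), but then the case $I=1$ must be handled separately. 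Second, and more seriously, a general divisorial valuation $v'$ on the toric variety $\CC^{n}/\langle h\rangle$ does not ``correspond to a lattice vector''; only monomial valuations do. What you actually need is (i) the inequality $a(v')+1\ge\psi(\rho_{v'})$ comparing $v'$ with the monomial valuation attached to $\rho_{v'}:=(v'(x_{1}),\dots,v'(x_{n}))\in N_{h}\cap(\RR_{\ge0})^{n}$, and (ii) the fact that $\rho_{v'}\notin\ZZ^{n}$, i.e.\ that its class in $N_{h}/\ZZ^{n}$ is that of a \emph{nontrivial} power $h^{k}$. Point (ii) is the heart of Reid--Tai: it comes from the congruences $\tilde v(x_{i})\equiv a_{i}\ (\mathrm{mod}\ l)$ obtained by letting the generator of the inertia group act on the graded ring of $\tilde v$ in eigencoordinates, with the root of unity normalized by its action on a uniformizer. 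Your phrase ``whose coordinate sum is, up to a nonnegative integer increment, equal to $\age(h^{k})$'' asserts the conclusion of this computation without performing it; without it the bound can degenerate to $a(v,X)\ge0$ (when $\rho_{v'}\in\ZZ^{n}$), which proves nothing.

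Finally, a caveat on the statement itself that your proof must confront: blowing up a smooth codimension-two center inside the smooth locus of $X$ always yields a divisor of discrepancy $1$, so when $\min_{g\ne1}\age(g)>2$ the asserted equality cannot hold for the infimum over all divisors over $X$. The formula is really a computation of the minimal discrepancy over the singular point (equivalently $\mathrm{mld}_{0}-1$), and your lower bound should be restricted to valuations centered there; any complete argument has to make this restriction explicit.
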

For each $n$, we regard $S_{n}$ as a subgroup of $\GL n{\CC}$ by
the standard permutation representation $S_{n}\hookrightarrow\GL n{\CC}$. 
\begin{lem}[cf.\ \cite{Wood-Yasuda-I}]
\label{lem: age index}For $g\in S_{n}\subset\GL n{\CC}$, 
\[
2\cdot\age(g)=\ind(g).
\]
\end{lem}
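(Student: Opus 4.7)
The plan is to reduce both sides of the identity to a sum of local contributions coming from the cycle decomposition of $g$, and then match them cycle by cycle. Write $g$ as a product of disjoint cycles of lengths $k_{1},\dots,k_{r}$ (including fixed points as $1$-cycles) so that $k_{1}+\cdots+k_{r}=n$. Both the age and the index are additive in this decomposition: the index because the $g$-orbits on $[n]$ are exactly the supports of the cycles, so $\ind(g) = n - r = \sum_{i}(k_{i}-1)$; and the age because the permutation matrix of $g$ is block-diagonal with respect to the partition into cycles, so its eigenvalues are the union of those of the individual cyclic blocks.

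Next I would compute each local contribution. Let $l$ denote the order of $g$, so $\zeta=e^{2\pi\sqrt{-1}/l}$. The $k$-cycle block has characteristic polynomial $T^{k}-1$, so its eigenvalues are the $k$-th roots of unity, which as $l$-th roots of unity are $\zeta^{0},\zeta^{l/k},\zeta^{2l/k},\dots,\zeta^{(k-1)l/k}$. Therefore the $a_{i}$'s for this block are $0,l/k,2l/k,\dots,(k-1)l/k$, and their sum divided by $l$ is
\[
\frac{1}{l}\cdot\frac{l}{k}\cdot\frac{k(k-1)}{2}=\frac{k-1}{2}.
\]
Summing over all cycles gives $\age(g)=\sum_{i}(k_{i}-1)/2$, which is exactly $\ind(g)/2$ by the previous formula.

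There is really no obstacle here beyond bookkeeping. The only subtle point is making sure the eigenvalue exponents $a_{i}$ are interpreted as integers in $[0,l)$, using the common value $l=\mathrm{ord}(g)$ required by the definition of age; this is automatic from $l/k \in \ZZ$ since $k\mid l$. Once that is checked, the equality $2\age(g)=\ind(g)$ follows immediately from the two additive cycle-by-cycle computations.
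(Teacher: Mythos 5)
Your proof is correct and follows essentially the same route as the paper: reduce by additivity of age and index to a single cycle, then compute the eigenvalue exponents of the cyclic block and sum them. The only difference is that you carry out the single-cycle computation for a general $k$-cycle inside a permutation of order $l$ rather than normalizing to one $n$-cycle, which is just a matter of bookkeeping.
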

\begin{proof}
From the additivity of age and index, we may suppose that $g$ is
the cyclic permutation, $1\mapsto2\mapsto\cdots\mapsto n\mapsto1$.
Then a diagonalization of $g$ is
\[
\diag(1,\zeta,\dots,\zeta^{n-1})
\]
with $\zeta=\exp(2\pi\sqrt{-1}/n)$. We have
\[
\age(g)=\frac{1}{n}\sum_{i=0}^{n-1}i=\frac{n-1}{2}=\frac{\ind(g)}{2}.
\]
\end{proof}
\begin{prop}
Suppose that $m\cdot\ind(G)\ge2$. Then $\pi_{G,m}$ is étale in codimension
one and
\[
\discrep(X_{G,m})=\frac{m\cdot\ind(G)}{2}-1.
\]
 In particular, $X_{G,m}$ is canonical. \end{prop}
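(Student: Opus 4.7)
The plan is to apply Proposition \ref{prop: discrep age} in an \'etale-local model of $X_{G,m}$ at each fixed point of the $G$-action on $Y_{m}$. Fix a geometric point $P = (p_{1}, \ldots, p_{n}) \in Y_{m}$ and let $H \subset G$ denote its stabilizer. Because $Y_{m}$ is smooth and we work in characteristic zero, the singularity of $X_{G,m}$ at $\pi_{G,m}(P)$ is \'etale-locally isomorphic to the linear quotient $T_{P} Y_{m} / H$. The induced $H$-representation on $T_{P} Y_{m} = \bigoplus_{i=1}^{n} T_{p_{i}} \PP^{m}$ is the standard permutation representation of $H \subset S_{n}$ on $\CC^{n}$ tensored with the trivial $m$-dimensional representation; the identifications $T_{p_{i}} = T_{p_{h(i)}}$ that make this precise are valid because each $h \in H$ fixes $P$.

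First I would verify that $\pi_{G,m}$ is \'etale in codimension one. For $1 \ne h \in G$, the fixed locus of $h$ on $Y_{m}$ is a product of small diagonals indexed by the $h$-orbits in $[n]$, and its codimension equals $m \cdot \ind(h)$. The hypothesis $m \cdot \ind(G) \ge 2$ then forces $m \cdot \ind(h) \ge 2$ for every non-identity $h \in G$, so no element of $G$ acts as a pseudo-reflection on any tangent space, and no codimension-one ramification divisor of $\pi_{G,m}$ can appear.

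Next I would compute ages. If a non-identity $h \in H$ has eigenvalues $\zeta^{a_{1}}, \ldots, \zeta^{a_{n}}$ on the permutation representation $\CC^{n}$, then on $\CC^{n} \otimes \CC^{m}$ each such eigenvalue occurs with multiplicity $m$, so $\age_{T_{P} Y_{m}}(h) = m \cdot \age_{\CC^{n}}(h) = m \cdot \ind(h)/2$, where the last equality uses Lemma \ref{lem: age index}. Since the $H$-action on $T_{P} Y_{m}$ has no pseudo-reflection, Proposition \ref{prop: discrep age} applies to the local model and gives
\[
\discrep\bigl(T_{P} Y_{m} / H\bigr) \;=\; \min_{1 \ne h \in H} \frac{m \cdot \ind(h)}{2} \;-\; 1.
\]

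Finally I would pass from local to global by taking the infimum of the above over all $P$. Because $\ind(h)$ depends only on the action of $h$ on $[n]$, one has $\min_{1 \ne h \in H} \ind(h) \ge \ind(G)$ for every stabilizer $H \subset G$, while equality is attained at the small-diagonal point $P = (p,p,\ldots,p)$, whose stabilizer is all of $G$. This yields $\discrep(X_{G,m}) = m \cdot \ind(G)/2 - 1 \ge 0$, so $X_{G,m}$ is canonical. The step I expect to be most delicate is the \'etale-local identification of the singularity of $X_{G,m}$ near $\pi_{G,m}(P)$ with the linear quotient $T_{P} Y_{m} / H$, together with the verification that the resulting $H$-representation is the permutation representation on $\CC^{n}$ tensored with a trivial one; once these are in place the conclusion is a direct combination of Lemma \ref{lem: age index} and Proposition \ref{prop: discrep age}.
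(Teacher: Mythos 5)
Your proposal is correct and follows essentially the same route as the paper: reduce to the linear quotient model and combine Proposition \ref{prop: discrep age} with Lemma \ref{lem: age index}, checking the absence of pseudo-reflections via $m\cdot\ind(h)\ge m\cdot\ind(G)\ge2$. Your write-up is merely more explicit about the \'etale-local linearization at each point and about why the minimum over stabilizers is attained at the small diagonal, steps the paper's sketch compresses into ``we may replace $Y_{m}$ with $(\CC^{m})^{n}$.''
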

\begin{proof}
If $\ind(G)\ge2$, then $G$ does not contain a transposition. If
$\ind(G)=1$, then $m\ge2$. It follows that $\pi_{G,m}$ is étale
in codimension one. If $X_{G,m}=\bigcup U_{i}$ is an open cover,
then 
\[
\discrep(X_{G,m})=\min_{i}\discrep(U_{i}).
\]
Moreover $\discrep(X)$ is stable under the extension of base field.
Therefore we may replace $Y_{m}$ with $(\CC^{m})^{n}$. Then the
proposition follows from Proposition \ref{prop: discrep age} and
Lemma \ref{lem: age index}. 
\end{proof}

\subsection{$K$-conjugacy classes of minimal index vs.\ crepant divisors}

Next we interpret the constant $\beta(G,K)$ in Malle's conjecture
in terms of the geometry of $X_{G,m}$.
\begin{prop}
If $m\cdot\ind(G)\ge2$, then $\beta(G,K)$ is equal to the number
of divisors $E$ over $X_{G,m}$ with $a(E)=\discrep(X_{G,m})$.\end{prop}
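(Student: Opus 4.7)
The plan is, geometrically, to set up a Galois-equivariant bijection between $\bar{K}$-prime divisors $E$ over $X_{G,m}\otimes_{K}\bar{K}$ with $a(E)=\discrep(X_{G,m})$ and non-identity conjugacy classes $[g]\in\Conj G$ with $\ind(g)=\ind(G)$, and then to pass to $\Gal(\bar{K}/K)$-orbits on both sides. Since a $K$-prime divisor on a $K$-modification of $X_{G,m}$ corresponds to a Galois orbit of $\bar{K}$-prime divisors on the base change, since discrepancies commute with base change, and since Galois orbits on $\Conj G$ are by definition elements of $\KConj G$, this will yield precisely $\beta(G,K)$.

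For the geometric bijection I would argue \'etale-locally. Around a $\bar{K}$-point of $X_{G,m}\otimes_{K}\bar{K}$ lying below $y\in(\PP_{\bar{K}}^{m})^{n}$ with stabilizer $H\subseteq G$, the singularity is \'etale-locally modelled on $\AA_{\bar{K}}^{mn}/H$, where $H$ acts through its permutation representation on the tangent space at $y$. By Lemma \ref{lem: age index} and a direct computation on this tangent space, the age of $h\in H$ equals $m\cdot\ind(h)/2$, so by Proposition \ref{prop: discrep age} the minimal local discrepancy is $m\cdot\ind(H)/2-1=\discrep(X_{G,m})$. For each $1\ne h\in H$ of minimal index, a toric (monomial) valuation in coordinates that diagonalize $h$ produces a divisor with discrepancy $\age(h)-1$, and two elements produce the same divisor if and only if they are conjugate in $G$. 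Gluing these local constructions gives an injection from the set of minimal-index conjugacy classes into the set of minimal-discrepancy divisors; this is the content underlying the proof of Proposition \ref{prop: discrep age} cited from \cite{MR2271984}.

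The main obstacle is the reverse inclusion: showing that \emph{every} minimal-discrepancy divisor is of this monomial form. For this I would invoke a Reid--Tai-type rigidity argument, namely that any divisorial valuation $v$ on $\AA_{\bar{K}}^{mn}/H$ whose discrepancy realises the minimum $\age(H)-1$ must, after pulling back to a $H$-invariant valuation on $\AA_{\bar{K}}^{mn}$ and applying a convexity and minimality estimate on an equivariant toroidal resolution, be a monomial valuation in some eigenbasis of some element $h\in H$ of minimal age. This rigidity at the minimum value of the age (as opposed to intermediate values) is the technical heart of the proposition and is the step I would expect to require the most care, even though it is standard in the McKay-correspondence literature for linear quotient singularities.

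Finally, for Galois equivariance: $\Gal(\bar{K}/K)$ acts on $\Conj G$ via the cyclotomic character (permuting the roots of unity that diagonalise $g$), and acts correspondingly on the monomial divisors by sending $E_{g}$ to $E_{g^{\chi}}$. Passing to Galois orbits therefore matches $K$-prime minimal-discrepancy divisors over $X_{G,m}$ with $K$-conjugacy classes of minimal index in $G$, whose number is $\beta(G,K)$ by definition.
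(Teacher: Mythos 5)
Your proposal is correct and follows essentially the same route as the paper: both reduce to the geometric McKay correspondence (minimal-discrepancy divisors over the base-changed quotient correspond to minimal-age, hence by Lemma \ref{lem: age index} minimal-index, conjugacy classes) and then descend by taking Galois orbits on both sides, matching $K$-prime divisors with $K$-conjugacy classes. The only difference is one of presentation: the paper cites the McKay correspondence and twisted jets as black boxes and works over the cyclotomic extension $K(\zeta_{\sharp G})$ rather than $\bar{K}$, whereas you sketch the local monomial-valuation construction and correctly flag the Reid--Tai rigidity step as the technical heart that must be imported from the literature.
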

\begin{proof}
The proof is similar to the one of \cite[Theorem 5.4]{Wood-Yasuda-I}.
We only sketch the outline. Let $X:=X_{G,m}$ and $X_{L}:=X\otimes_{K}L$
for a field extension $L/K$. Thanks to Lemma \ref{lem: age index},
the McKay correspondence \cite{MR1677693} states that for a suitable
extension $L/K$, divisors $E$ over $X_{L}:=X\otimes_{K}L$ with
\[
a(E)=\discrep(X_{L})=\discrep(X)
\]
correspond to conjugacy classes $[g]\in\Conj G$ with $\ind(g)=\ind(G)$.
We denote by $A$ the set of such divisors over $X_{L}$. As such
an extension $L/K$, we can take, for instance, the cyclotomic extension
$K(\zeta_{l})/K$, where $l=\sharp G$ and $\zeta_{l}$ is a primitive
root of unity. The set of divisors $E$ over $X$ with $a(E)=\discrep(X)$
is identified with the $\Gal(K(\zeta_{l})/K)$-quotient of $A$. The
correspondence above of $A$ with a set of conjugacy classes is equivariant
with respect to actions of $\Gal(K(\zeta_{l})/K)$. We can see this,
for instance, by using twisted jets \cite{MR2271984}. This proves
the proposition.
\end{proof}
We would like to specialize Conjecture \ref{conj: Manin variant}
to the case where $X=X_{G,m}$. Since $X$ now has Picard number one,
the conjecture gives
\[
N_{U}(B)\sim c\cdot B\cdot(\log B)^{\cd(X)}.
\]
Suppose now that $m\cdot\ind(G)=2$. For instance, it is the case
when $G=S_{n}$ and $m=2$ or when $G=A_{n}$ and $m=1$. From the
proposition above, Malle's conjecture gives
\[
M_{G,K}(B)\sim c\cdot B^{m/2}\cdot(\log B)^{\cd(X)}.
\]

\subsection{Euler products}

Concerning constants $c$ in asymptotic formulas of the form 
\[
c\cdot B^{a}\cdot(\log B)^{b},
\]
we mentioned Beukers' observation on the density of rational points
and Bhargava's conjecture on the density of $S_{n}$-fields. Euler
products in their statements are related as follows. 

Suppose $G=S_{n}$ and $m=2$. Then $X=X_{S_{n},2}$ is nothing but
the symmetric product $S^{n}\PP^{2}$. Let $Z$ be the Hilbert scheme
of $n$ points on $\PP^{2}$. Then the Hilbert-Chow morphism $Z\to X$
is a crepant resolution (the pullback of $K_{X}$ coincides with $K_{Z}$),
as proved by Beauville \cite{0537.53056}. Let $X^{\circ}:=S^{n}\AA^{2}$,
the symmetric product of the affine plane, and $Z^{\circ}$ the Hilbert
scheme of $n$ points on $\AA^{2}$. They are open subvarieties of
$X$ and $Z$ respectively, and the induced map $Z^{\circ}\to X^{\circ}$
is a crepant resolution, too.  In fact, the Hilbert scheme is naturally
defined over the ring of integers $\cO_{K}$. Hence we can take the
reduction $Z_{R(v)}^{\circ}$ for $v\in\Val(K)_{f}$. For instance,
from \cite[Corollary 3.1]{MR2492446}, we have

\[
\sharp Z_{R(v)}^{\circ}(R(v))=\sum_{i=1}^{n}P(n,i)N(v)^{n+i}=\sum_{i=0}^{n-1}P(n,n-i)N(v){}^{2n-i}.
\]
Therefore, the Euler product appearing in Conjecture \ref{conj: Bhargava}
is rewritten as
\[
\prod_{v\in\Val(K)_{f}}\left(1-\frac{1}{N(v)}\right)^{\rho(X)}\frac{\sharp Z_{R(v)}^{\circ}(R(v))}{N(v)^{\dim Z}},
\]
having the apparent similarity with the Euler product in Beukers'
observation. This coincidence seems more mysterious to the author
than the ones for constants $a$ and $b$. He does not know whether
discussions in later sections by height zeta functions help us to
clarify anything behind (see Remark \ref{rem: m ind =00003D2}). 
\begin{rem}
A relation between Bhargava's mass formula \cite{MR2354798} and the
Hilbert scheme of points was first observed in \cite{Wood-Yasuda-I}.
\end{rem}

\section{Galois covers and correspondences of points\label{sec:Galois-covers}}

In this section, we show a correspondence between points of $X_{G,m}$
and points of $\PP_{K}^{m}$ in a slightly generalized situation.

\subsection{General Galois covers\label{sub:General-Galois-covers}}
\begin{defn}
A \emph{large $G$-field} is a Galois extension $L/K$ endowed with
an isomorphism $\Gal(L/K)\cong G$. Two large $G$-fields $L$ and
$L'$ are said to be\emph{ isomorphic }if there exists a $G$-equivariant
$K$-isomorphism $L\cong L'$. The set of isomorphism classes is denoted
by $\Fie GK$. 
\end{defn}

Let $G$ be a finite group, $Y$ a $K$-variety endowed with a faithful
$G$-action and $X:=Y/G$ the quotient variety with the quotient morphism
$\pi:Y\to X$.
\begin{defn}
\label{def: equivariant points-1}For $L\in\Fie GK$, an $L$-point
$y\in Y(L)$ is called \emph{$G$-equivariant }if the morphism $y:\Spec L\to Y$
is $G$-equivariant. We denote by $Y(L)^{G}$ the set of $G$-equivariant
$L$-points. 
\end{defn}
Note that $Y(L)^{G}$ is the fixed point locus of the $G$-action
on $Y(L)$ given by $g\cdot y:=g\circ y\circ g^{-1}$. We put 
\[
Y\{K\}:=\bigsqcup_{L\in\Fie GK}\frac{Y(L)^{G}}{Z(G)},
\]
where $Z(G)$ is the center of $G$ and identified with the set of
$G$-equivariant $K$-automorphisms of $L$. Let $Y^{\circ}\subset Y$
and $X^{\circ}\subset X$ be the largest open subvarieties where $\pi$
is unramified. We similarly put
\[
Y^{\circ}\{K\}:=\bigsqcup_{L\in\Fie GK}\frac{Y^{\circ}(L)^{G}}{Z(G)}\subset Y\{K\}.
\]

For a $G$-equivariant point $y:\Spec L\to Y$, the induced morphism
between the $G$-quotients of the source and target gives a $K$-point
of $X$, and defines a map
\[
\pi_{*}:Y\{K\}\to X(K).
\]

\begin{defn}
We define the set of \emph{primitive $K$-points }of $X$ by 
\[
X(K)^{\prim}:=X^{\circ}(K)\setminus\bigcup_{H\subsetneq G}\Image((Y^{\circ}/H)(K)\to X^{\circ}(K)).
\]
\end{defn}
\begin{rem}
\label{rem: thin primitive}For a $K$-variety $S$, a subset $A\subset S(K)$
is called \emph{thin }if $A$ is contained in the image of $T(K)\to S(K)$
for a generically finite morphism $T\to S$ which does not admit a
rational section. The set $X(K)\setminus X(K)^{\prim}$ is a thin
subset of $X(K)$. Colliot-Thélène conjectured that if $S$ is unirational,
then $S(K)$ is not thin (see \cite[page 30]{MR2363329}). The conjecture
implies the folklore conjecture that every finite group is the Galois
group of some Galois extension of $\QQ$. His conjecture also shows
that if $Y$ is rational, then $X(K)^{\prim}$ is not empty. \end{rem}
\begin{lem}
A $K$-point $x\in X(K)$ is primitive if and only if the fiber product
\[
\Spec K\times_{x,X,\pi}Y
\]
is connected and étale over $\Spec K$.\end{lem}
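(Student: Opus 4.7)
The plan is to decouple the two conditions: first I would verify that $F := \Spec K \times_{x,X,\pi} Y$ is étale over $\Spec K$ if and only if $x \in X^{\circ}(K)$, and then, granting this, I would identify connectedness of $F$ with the primitivity of $x$ via a $G$-torsor analysis.

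For the étaleness part, since $\pi|_{Y^{\circ}}: Y^{\circ} \to X^{\circ}$ is unramified by definition, hence étale in characteristic zero, its base change along a $K$-point landing in $X^{\circ}$ is automatically étale. Conversely, if $x \notin X^{\circ}(K)$, then some geometric point of $Y$ above $\bar{x}$ has nontrivial stabilizer in $G$, so $\pi$ is ramified there and $F$ fails to be reduced at that point, in particular not étale over $\Spec K$.

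Assume now $x \in X^{\circ}(K)$. Geometrically, $F(\bar{K})$ is the $G$-orbit of any geometric point above $\bar{x}$; since stabilizers over $X^{\circ}$ are trivial, it has cardinality $|G|$ and is a left $G$-torsor. Choosing an identification $F(\bar{K}) \cong G$ of left $G$-sets, the commuting $\Gal(\bar{K}/K)$-action is given by right multiplication through a homomorphism $\phi : \Gal(\bar{K}/K) \to G$, well-defined up to conjugation. The connected components of $F$ correspond to the $\Gal(\bar{K}/K)$-orbits on $F(\bar{K})$, i.e.\ to the right cosets of $\Image(\phi)$ in $G$, so $F$ is connected if and only if $\phi$ is surjective.

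To link this to primitivity, I would check that for any subgroup $H \subseteq G$ the fiber of $Y^{\circ}/H \to X^{\circ}$ over $x$ is the scheme-theoretic quotient $F/H$, using that quotients by the finite group $H$ commute with base change along the étale morphism $\pi|_{Y^{\circ}}$. A $K$-point of $F/H$ then corresponds to a $\Gal(\bar{K}/K)$-fixed element of $H\backslash F(\bar{K}) = H\backslash G$, and such a fixed coset $Hg$ exists precisely when $g\,\Image(\phi)\,g^{-1} \subseteq H$. Consequently, $x$ lies in the image of $(Y^{\circ}/H)(K) \to X^{\circ}(K)$ for some proper $H \subsetneq G$ if and only if $\Image(\phi)$ is conjugate into a proper subgroup, if and only if $\phi$ is not surjective, if and only if $F$ is disconnected. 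The main technical point I expect is justifying the identification of the fiber of $Y^{\circ}/H \to X^{\circ}$ at $x$ with $F/H$; once this compatibility is in place, the rest is standard Galois-torsor bookkeeping and the two equivalences combine to yield the stated characterization.
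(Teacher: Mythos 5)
Your argument is correct and follows essentially the same route as the paper: both split the statement into the \'etaleness criterion ($x\in X^{\circ}(K)$) and the connectedness criterion, and both identify connectedness of the fiber with the non-existence of a lift to some $Y^{\circ}/H$ via the stabilizer of a connected component (your $\Image(\phi)$ up to conjugacy is exactly that stabilizer). Your torsor/classifying-homomorphism bookkeeping is just a more explicit rendering of the paper's shorter scheme-theoretic argument, and the one step you flag as the technical point (identifying the fiber of $Y^{\circ}/H\to X^{\circ}$ with $F/H$) goes through since everything is finite \'etale over $X^{\circ}$.
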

\begin{proof}
It is easy to see that $x\in X^{\circ}(K)$ if and only if $ $$\Spec K\times_{x,X,\pi}Y$
is étale over $\Spec K$. Suppose that $x$ is not primitive. Then
there exists a subgroup $H\subsetneq G$ such that $x$ lifts to a
$K$-point of $Y^{\circ}/H$. It follows that $\Spec K\times_{x,X}(Y/H)$
is the disjoint union of $[G:H]$ copies of $\Spec K$. In particular,
it is not connected. Hence $ $$\Spec K\times_{x,X,\pi}Y$ is not
connected, either. 

Conversely suppose that $ $$\Spec K\times_{x,X,\pi}Y$ is not connected.
Let $H$ be the stabilizer of a connected component. Then there is
a lift of $x$ in $Y/H(K)$. Hence $x$ is not primitive.\end{proof}
\begin{prop}
The map $\pi_{*}$ induces a bijection
\[
Y^{\circ}\{K\}\to X(K)^{\prim}.
\]
\end{prop}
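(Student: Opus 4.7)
The plan is to use the preceding lemma as the main engine: for $x \in X(K)^{\prim}$, the fiber $Y_x := \Spec K \times_{x,X,\pi} Y$ is connected and étale over $\Spec K$, so it has the form $\Spec L$ for a single finite separable field extension $L/K$. I will produce an inverse to $\pi_*$ by extracting from such a primitive $x$ both the large $G$-field structure on $L$ and the canonical $G$-equivariant $L$-point of $Y^\circ$, and then check that the two maps compose to the identity on each side.

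First I would verify that $\pi_*$ lands in $X(K)^{\prim}$. If $L \in \Fie{G}{K}$ and $y \in Y^\circ(L)^G$, then pulling $Y \to X$ back along $\pi_*(y) : \Spec K \to X$ gives a scheme whose underlying set is the $G$-orbit of $y$; since $y$ is $G$-equivariant this orbit is $\Spec L$ with the Galois action of $G = \Gal(L/K)$, hence connected and étale. By the preceding lemma, $\pi_*(y) \in X(K)^{\prim}$. Conversely, given $x \in X(K)^{\prim}$, the lemma yields $Y_x = \Spec L$ with $L/K$ a separable field extension; because $Y \to X$ is a $G$-torsor on $X^\circ$, the $G$-action makes $L/K$ Galois with $\Gal(L/K) \cong G$, i.e.\ $L \in \Fie{G}{K}$, and the natural inclusion $\Spec L = Y_x \hookrightarrow Y^\circ$ is tautologically a $G$-equivariant $L$-point $y_x$. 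This produces the candidate inverse $x \mapsto [y_x]$ into $Y^\circ\{K\}$.

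Next I would check well-definedness and bijectivity modulo $Z(G)$. Two $G$-equivariant points $y, y' \in Y^\circ(L)^G$ yield the same $\pi_*$-image iff they factor through the same $Y_x$; since $\Spec L$ and $Y_x$ are both $G$-torsors over $\Spec K$, any two $G$-equivariant isomorphisms $\Spec L \to Y_x$ differ by a $G$-equivariant $K$-automorphism of $\Spec L$, and the group of such automorphisms is exactly $Z(G) \subset \Gal(L/K)$. This gives injectivity after taking the $Z(G)$-quotient. For points coming from non-isomorphic large $G$-fields $L, L' \in \Fie{G}{K}$, equality of images would force a $G$-equivariant $K$-isomorphism $\Spec L \cong Y_x \cong \Spec L'$, so $L$ and $L'$ represent the same class in $\Fie{G}{K}$, contradicting the disjoint union.

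The main obstacle I anticipate is the careful bookkeeping in the $G$-equivariance: making precise the convention that a $G$-equivariant $y : \Spec L \to Y$ means $g \circ y = y \circ g$ under the Galois action on $\Spec L$, and consequently verifying that (i) the $G$-action on $Y_x$ induced from $Y$ coincides with the Galois action on $\Spec L$ under the isomorphism arising from $y$, and (ii) the stabilizer group of such an equivariant point in $\Aut_K(L)$ is precisely $Z(G)$ rather than something larger or smaller. Once these identifications are set up carefully, everything else follows formally from the torsor structure of $Y^\circ \to X^\circ$ combined with the preceding lemma.
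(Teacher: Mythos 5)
Your proposal is correct and follows essentially the same route as the paper: both arguments hinge on the preceding lemma together with the observation that for a primitive $x$ the fiber product $\Spec K\times_{x,X,\pi}Y$ and $\Spec L$ are étale $G$-torsors over $\Spec K$, so the equivariant point is recovered from $x$ up to the $Z(G)$-ambiguity. Your extra bookkeeping on well-definedness modulo $Z(G)$ just makes explicit what the paper leaves implicit in the phrase ``the isomorphism class of $y$ is reconstructed from $x$.''
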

\begin{proof}
Let $(y:\Spec L\to Y)\in Y^{\circ}\{K\}$ and $x:=\pi_{*}(y)\in X(K)$.
Then there exists a natural $G$-equivariant $K$-morphism 
\[
\Spec L\to\Spec K\times_{x,X,\pi}Y.
\]
Since the source and the target are both étale $G$-torsors over $\Spec K$,
this morphism is an isomorphism. Thus the isomorphism class of $y$
is reconstructed from $x$. This shows that the map $\pi_{*}|_{Y^{\circ}\{K\}}$
is an injection into $X(K)^{\prim}$. 

Conversely, we start with an arbitrary primitive point $x\in X(K)^{\prim}$.
The last lemma shows that $\Spec K\times_{x,X,\pi}Y$ is isomorphic
to $\Spec L$ for some $L\in\Fie GK$. The second projection $\Spec K\times_{x,X,\pi}Y\to Y$
defines an equivariant point $\Spec L\to Y^{\circ}$, which is a lift
of $x$. This shows that $\pi_{*}:Y^{\circ}\{K\}\to X(K)^{\prim}$
is surjective. \end{proof}
\begin{prop}
\label{prop:H(y)=00003DH(pi(y))}Let $D$ be a metrized divisor $X$.
For $y\in Y\{K\}$, we have
\[
H_{\pi^{*}D}(y)^{1/\sharp G}=H_{D}(\pi_{*}(y)).
\]
\end{prop}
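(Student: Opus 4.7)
The plan is to reduce the claim to the basic functorial properties of heights collected in Lemma~\ref{lem: properties height}. Pick a representative $y:\Spec L\to Y$ of the class in $Y\{K\}$, with $L\in\Fie GK$, and set $x:=\pi_{*}(y)\in X(K)$. Note that $[L:K]=\sharp G$ since $L/K$ is Galois with group $G$.

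The only nontrivial step is to identify $\pi\circ y$ with the base change $x_{L}:\Spec L\to\Spec K\xrightarrow{x}X$ appearing in Lemma~\ref{lem: properties height}(3). Since $y$ is $G$-equivariant (for the Galois action on $\Spec L$) and $\pi$ is $G$-invariant, the composition $\pi\circ y:\Spec L\to X$ is $G$-invariant, hence factors uniquely through the quotient $\Spec L\to\Spec(L^{G})=\Spec K$. By the very definition of $\pi_{*}(y)$, this factorization is $x$, so $\pi\circ y=x_{L}$.

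Now I apply Lemma~\ref{lem: properties height}(2) to the morphism $\pi:Y\to X$ and the $L$-point $y$, getting
\[
H_{\pi^{*}D}(y)=H_{D}(\pi\circ y)=H_{D}(x_{L}),
\]
and then Lemma~\ref{lem: properties height}(3) applied to $x\in X(K)$ and the extension $L/K$ gives
\[
H_{D}(x_{L})=H_{D}(x)^{[L:K]}=H_{D}(\pi_{*}(y))^{\sharp G}.
\]
Taking $\sharp G$-th roots yields the asserted identity. As a bonus, the right-hand side is manifestly independent of the chosen representative $y$, so the formula also confirms that $H_{\pi^{*}D}(y)^{1/\sharp G}$ is well defined on the $Z(G)$-quotient defining $Y\{K\}$ (directly: replacing $y$ by $y\circ\sigma^{*}$ for $\sigma\in Z(G)$ does not change $\pi\circ y$, since the $K$-automorphism $\sigma^{*}$ of $\Spec L$ is absorbed into $x_{L}$).

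There is no real obstacle here: the entire argument is a two-line diagram chase once one observes the factorization $\pi\circ y=x_{L}$. The implicit point worth emphasizing in the write-up is that the pullback metric on $\pi^{*}D$ is the one making Lemma~\ref{lem: properties height}(2) hold, which is the standard convention already used throughout Section~\ref{sec:Heights}.
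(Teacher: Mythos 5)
Your proof is correct and follows essentially the same route as the paper: apply Lemma \ref{lem: properties height}(2) to $\pi$ to get $H_{\pi^{*}D}(y)=H_{D}(\pi\circ y)$, observe that $\pi\circ y$ factors as $\Spec L\to\Spec K\xrightarrow{\pi_{*}(y)}X$, and conclude with Lemma \ref{lem: properties height}(3) and $[L:K]=\sharp G$. The extra remark on well-definedness under the $Z(G)$-action is a harmless (and correct) addition not present in the paper's proof.
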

\begin{proof}
From Lemma \ref{lem: properties height}, for $y\in Y(L)^{G}$, 
\begin{align*}
H_{\pi^{*}D}(y) & =H_{D}(\Spec L\xrightarrow{y}\Spec Y\to\Spec X)\\
 & =H_{D}(\Spec L\to\Spec K\xrightarrow{\pi_{*}(y)}\Spec X)\\
 & =H_{D}(\pi_{*}(y))^{[L:K]}.
\end{align*}
This shows the assertion.
\end{proof}

\subsection{Permutation actions of a transitive subgroup\textmd{\label{subsec:permutation-actions}}}

Now suppose that $G$ is a transitive subgroup of $S_{n}$, acting
on $[n]:=\{1,2,\dots,n\}$. For each $i\in[n]$, we put $G_{i}\subset G$
to be the stabilizer subgroup of $i$. Since $G_{i}$, $i\in[n]$
are conjugate one another, for each $L\in\Fie GK$, the invariant
subfields $L^{G_{i}}$, $i\in[n]$ are isomorphic over $K$ one another.
There exists a map
\[
\phi:\Fie GK\to\fie GK,\, L\mapsto L^{G_{1}}.
\]
The group of $K$-automorphisms of a small $G$-field is isomorphic
to the opposite group of the centralizer $C_{S_{n}}(G)$.

\begin{lem}
\label{lem: phi-1}The map $\phi$ is a $\frac{\sharp N_{S_{n}}(G)\sharp Z(G)}{\sharp C_{S_{n}}(G)\sharp G}$-to-one
surjection.\end{lem}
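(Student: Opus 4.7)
The plan is to describe the fiber of $\phi$ over a fixed $F \in \fie{G}{K}$ as the orbit space of a set of group isomorphisms modulo inner automorphisms, and then to count that orbit space. Surjectivity is immediate: if $F$ has Galois closure $\hat F/K$, the natural action of $\Gal(\hat F/K)$ on the $n$ $K$-embeddings $F \hookrightarrow \hat F$ is permutation-isomorphic to $G$ acting on $[n]$, and any compatible labeling of these embeddings produces an isomorphism $\alpha : \Gal(\hat F/K) \xrightarrow{\sim} G$ with $(\hat F, \alpha) \in \phi^{-1}(F)$.

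For the fiber count I would fix $F$, choose a model $L := \hat F$, and set $\Gamma := \Gal(L/K)$ and $H_0 := \Gal(L/F)$. Every preimage of $F$ has Galois closure $K$-isomorphic to $L$, so we may assume its underlying field is $L$. Translating the $G$-equivariance in the definition of $\Fie{G}{K}$, one checks that $(L,\alpha) \sim (L,\alpha')$ precisely when $\alpha$ and $\alpha'$ differ by an inner automorphism of $\Gamma$; and the condition $L^{\alpha^{-1}(G_1)} \cong_K F$ translates to $\alpha(H_0) \sim_G G_1$. Hence the fiber is identified with $\mathcal{A}/\Inn{\Gamma}$, where
\[
\mathcal{A} := \{\alpha : \Gamma \xrightarrow{\sim} G \mid \alpha(H_0) \sim_G G_1\}.
\]
Since the $\Inn{\Gamma}$-action on $\mathcal{A}$ is free, every orbit has size $\sharp \Gamma/\sharp Z(\Gamma) = \sharp G/\sharp Z(G)$.

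To count $\mathcal{A}$, fix any reference $\alpha_0 \in \mathcal{A}$ and identify $\mathcal{A}$ with $\{\beta \in \Aut(G) : \beta(G_1) \sim_G G_1\}$ via $\beta \leftrightarrow \beta \circ \alpha_0$. The key structural observation is that this set is exactly the image of the conjugation homomorphism $N_{S_n}(G) \to \Aut(G)$, whose kernel is $C_{S_n}(G)$: a permutation $\pi \in S_n$ normalizes $G$ iff it realizes the $G$-set $G/G_1$ as isomorphic to itself with $G$-action twisted by $c_\pi$, and such an isomorphism exists iff $c_\pi(G_1)$ is $G$-conjugate to $G_1$. This gives $\sharp \mathcal{A} = \sharp N_{S_n}(G)/\sharp C_{S_n}(G)$, and dividing by the orbit size yields the claimed $\sharp N_{S_n}(G)\sharp Z(G)/(\sharp C_{S_n}(G)\sharp G)$. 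The main obstacle is precisely this last structural identification of $N_{S_n}(G)/C_{S_n}(G)$ with the stabiliser of the $G$-conjugacy class $[G_1]$ inside $\Aut(G)$; the rest is bookkeeping about torsors and the Galois correspondence.
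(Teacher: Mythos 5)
Your argument is correct and yields the right count, but it is organized differently from the paper's: you work fiberwise over a fixed small $G$-field, whereas the paper works globally. The paper never fixes $F$; it realizes both $\Fie GK$ and $\fie GK$ as quotients of the single set $\ContSurGK G$ --- by $G$-conjugation and by the (restriction of the) $S_{n}$-conjugation action on $\ContHomGK{S_{n}}$, respectively --- and reads off that the two quotient maps are $\sharp G/\sharp Z(G)$-to-one and $\sharp N_{S_{n}}(G)/\sharp C_{S_{n}}(G)$-to-one; dividing gives the degree of $\phi$. Your computation is the same one unwound over a single fiber: under the dictionary ``continuous surjection $\Gal(\bar{K}/K)\twoheadrightarrow G$ killing $\Gal(\bar{K}/L)$'' $=$ ``isomorphism $\Gamma\xrightarrow{\sim}G$'', your $\mathcal{A}/\Inn{\Gamma}$ is exactly the fiber of $\ContSurGK G\to\fie GK$ over $F$ modulo $G$-conjugation, and your key structural observation --- that $\{\beta\in\Aut(G):\beta(G_{1})\sim_{G}G_{1}\}$ is the image of $N_{S_{n}}(G)\to\Aut(G)$ with kernel $C_{S_{n}}(G)$ --- is precisely the fact the paper absorbs into the identification of $\fie GK$ with an $S_{n}$-conjugation quotient. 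So the underlying group theory is identical; your route makes the torsor structure explicit at the cost of having to prove that identification by hand, and your sketch of it (transitivity of $G$ gives $c_{\pi}(G_{1})\sim_{G}G_{1}$ for $\pi\in N_{S_{n}}(G)$; conversely, a $\beta$ with $\beta(G_{1})\sim_{G}G_{1}$ yields a $G$-set isomorphism from $[n]$ to $[n]$ with the action twisted by $\beta$, whose underlying permutation $\pi$ satisfies $c_{\pi}=\beta$) is the correct and standard one. The fiberwise version has the mild advantage of displaying the fiber itself, not just its cardinality; the paper's version is shorter because the $\sharp N_{S_{n}}(G)/\sharp C_{S_{n}}(G)$ count comes for free from stabilizers of the $S_{n}$-action.
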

\begin{proof}
The map is clearly surjective. Let $\ContSurGK G$ be the set of continuous
surjections of $\Gal(\bar{K}/K)$ to $G$. The natural map
\[
\ContSurGK G\to\Fie GK
\]
can be identified with the quotient map associated to the $G$-action
on $\ContSurGK G$ by conjugation. Therefore this map is a $\frac{\sharp G}{\sharp Z(G)}$-to-one
surjection. On the other hand, the natural map 
\begin{equation}
\ContSurGK G\to\fie GK\label{eq: Sur to G-fie-1}
\end{equation}
is identified with the restriction of the natural map
\[
\ContHomGK{S_{n}}\to\fie nK,
\]
where $\fie nK$ is the set of isomorphism classes of degree $n$
extensions $L/K$. The last map is, in turn, identified with the quotient
map associated to the $S_{n}$-action. Hence map (\ref{eq: Sur to G-fie-1})
is a $\frac{\sharp N_{S_{n}}(G)}{\sharp C_{S_{n}}(G)}$-to-one surjection.
We have proved the lemma.
\end{proof}
From the lemma, counting large $G$-fields and counting small $G$-fields
are equivalent problems. 

Let $W$ be a $K$-variety and let $Y:=W^{n}$, which has a natural
$G$-action. For $L\in\Fie GK$, $y\in Y(L)^{G}$ and $i\in[n]$,
the composition 
\[
\Spec L\xrightarrow{y}W^{n}\xrightarrow{p_{i}}W
\]
is stable under the $G_{i}$-action on the source, where $p_{i}$
is the $i$-th projection. Therefore we obtain a morphism
\[
\psi_{i}(y):\Spec L^{G_{i}}\to W.
\]

\begin{lem}
For each $i\in[n]$, the induced map
\begin{align}
\psi_{i}:Y(L)^{G} & \to W\left(L^{G_{i}}\right),\, y\mapsto\psi_{i}(y)\label{eq:Y(L)G --> W( )}
\end{align}
is bijective.\end{lem}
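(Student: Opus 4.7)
The plan is to produce an explicit two-sided inverse of $\psi_i$ by unpacking the $G$-equivariance condition coordinatewise. Since $Y = W^n$, we identify $Y(L) = W(L)^n$ and write a point as a tuple $(y_1,\dots,y_n)$ with $y_j:\Spec L\to W$. Fixing the isomorphism $\sigma: G \xrightarrow{\sim}\Gal(L/K)$ and the permutation action of $G$ on $Y$ (so that $p_i\circ g = p_{g^{-1}(i)}$ for projections), a direct computation shows that $y$ is $G$-equivariant in the sense of Definition~\ref{def: equivariant points-1} if and only if
\[
y_{g(i)} \;=\; y_i\circ \sigma_{g^{-1}} \qquad\text{for every } g\in G \text{ and every } i\in[n].
\]

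Taking $g\in G_i$ in this identity shows that $y_i\circ\sigma_g = y_i$, so $y_i$ is invariant under the action of $G_i$ on $\Spec L$, and therefore factors uniquely through the quotient $q:\Spec L\to\Spec L^{G_i}$ to yield a morphism $\psi_i(y)\in W(L^{G_i})$. This is precisely the construction preceding the lemma. Injectivity of $\psi_i$ is then immediate from the displayed formula: once $y_i$ is known, every other $y_j$ is determined by choosing any $g\in G$ with $g(i)=j$ and setting $y_j := y_i\circ\sigma_{g^{-1}}$.

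For surjectivity, start with an arbitrary $w\in W(L^{G_i})$, set $y_i := w\circ q:\Spec L\to W$ (which is automatically $G_i$-invariant), and define $y_j := y_i\circ\sigma_{g^{-1}}$ using any $g\in G$ with $g(i)=j$. Well-definedness of $y_j$ follows because any two valid choices $g,g'$ satisfy $g^{-1}g'\in G_i$, and $y_i$ is $G_i$-invariant. A short calculation using $\sigma$ a homomorphism then verifies the equivariance identity $y_{h(k)}=y_k\circ\sigma_{h^{-1}}$ for all $h\in G$ and $k\in[n]$, so the tuple $(y_1,\dots,y_n)$ lies in $Y(L)^G$. By construction $\psi_i$ applied to this tuple recovers $w$, and by the uniqueness already established the assignment $w\mapsto(y_1,\dots,y_n)$ is a genuine inverse to $\psi_i$.

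The mathematical content is essentially the bijection between $G$-equivariant maps $G\to W(L)$ and $G_i$-equivariant maps out of the coset space $G/G_i$; the principal obstacle is not conceptual but bookkeeping—keeping the left versus right actions, and the direction of the isomorphism $\sigma:G\cong\Gal(L/K)$, consistently oriented so that the defining formula $y_{g(i)}=y_i\circ\sigma_{g^{-1}}$ holds with the right signs.
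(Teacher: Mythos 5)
Your proposal is correct and follows essentially the same route as the paper: the paper's proof consists precisely of the reconstruction formula $y=\prod_{j}(\Spec L\xrightarrow{\sigma_{j}}\Spec L\to\Spec L^{G_{i}}\xrightarrow{w}W)$, which is your $y_{j}=y_{i}\circ\sigma_{g^{-1}}$ written globally. You simply spell out the well-definedness and equivariance checks that the paper leaves implicit.
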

\begin{proof}
The given point $y\in Y(L)^{G}$ is be reconstructed from the induced
point $w=\psi_{i}(y)\in W\left(L^{G_{i}}\right)$. Indeed, if $\sigma_{j}\in G$
is any element sending $i$ to $j$. then
\begin{equation}
y=\prod_{i=1}^{n}(\Spec L\xrightarrow{\sigma_{j}}\Spec L\xrightarrow{\alpha}\Spec L^{G_{i}}\xrightarrow{w}W),\label{eq:y as product}
\end{equation}
where $\alpha$ is the morphism corresponding to the inclusion $L^{G_{i}}\subset L$.
This proves the lemma.
\end{proof}
For $i,j\in[n]$, let 
\[
\alpha_{ij}:W(L^{G_{i}})\to W(L^{G_{j}})
\]
be the unique map such that $\alpha_{ij}\circ\psi_{i}=\psi_{j}$.
Any element $g\in G=\Gal(L/K)$ with $g(j)=i$ gives an isomorphism
$L^{G_{i}}\to L^{G_{j}}$, being independent of the choice of $g$.
This isomorphism induces the map $\alpha_{ij}$. 
\begin{defn}
\label{def: original}With notation as above, we say that a point
$x\in W(L^{G_{i}})$ is \emph{original }if for any nontrivial normal
subgroup $1\ne H\vartriangleleft G$, $x\notin W(L^{\left\langle G_{i},H\right\rangle })$.
Here we identify $W(L^{\left\langle G_{i},H\right\rangle })$ as a
subset of $W(L^{G_{i}})$ in the obvious way. We denote the set of
original points in $W(L^{G_{i}})$ by $W(L^{G_{i}})^{\orig}$.
\end{defn}
Obviously $\alpha_{ij}(W(L^{G_{i}})^{\orig})=W(L^{G_{j}})^{\orig}$.
We also note that for $1\ne H\vartriangleleft G$ and $i\in[n]$,
we have $H\not\subset G_{i}$ and $L^{\left\langle G_{i},H\right\rangle }\subsetneq L^{G_{i}}$.
Indeed, if a normal subgroup $N$ of $G$ is contained in some $G_{i}$,
then, since $G_{i}$, $i\in[n]$ are conjugate one another, $N$ is
contained in all $G_{i}$. Therefore $N=1$.
\begin{lem}
We have
\[
\psi_{i}(Y^{\circ}(L)^{G})=W(L^{G_{i}})^{\orig}.
\]
\end{lem}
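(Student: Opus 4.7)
My plan is to translate membership in $Y^{\circ}(L)^{G}$ into a condition on the geometric stabilizer of $y$ under the permutation $G$-action on $Y=W^{n}$, and then, via the reconstruction formula (\ref{eq:y as product}), to rephrase this stabilizer as a condition on the field of definition of $w=\psi_{i}(y)$. Since $Y^{\circ}$ is by definition the open locus where the quotient map $\pi$ is \'etale, $y\colon\Spec L\to Y$ lands in $Y^{\circ}$ if and only if, after extending scalars to an algebraic closure $\bar{L}$, the resulting geometric point $\bar{y}\in Y(\bar{L})$ has trivial stabilizer in $G$.

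I would then describe $\bar{y}=(\bar{w}_{1},\dots,\bar{w}_{n})$ explicitly using (\ref{eq:y as product}): each $\bar{w}_{j}\in W(\bar{L})$ is the pullback of $w$ along the embedding $\tau_{j}:=\sigma_{j}|_{L^{G_{i}}}\colon L^{G_{i}}\hookrightarrow\bar{L}$. By Galois theory there is a unique subgroup $H$ with $G_{i}\subset H\subset G$ such that $L^{H}$ is the smallest subfield of $L^{G_{i}}$ over which $w$ is defined; then $\bar{w}_{j}=\bar{w}_{k}$ iff $\tau_{j}|_{L^{H}}=\tau_{k}|_{L^{H}}$, equivalently $\sigma_{j}H=\sigma_{k}H$.

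Now $g\in G$ fixes $\bar{y}$ iff $\bar{w}_{g(j)}=\bar{w}_{j}$ for every $j\in[n]$, iff $g\sigma_{j}H=\sigma_{j}H$ for every $j$, iff $g\in\sigma_{j}H\sigma_{j}^{-1}$ for every $j$. Because $G_{i}\subset H$, the conjugate $\sigma_{j}H\sigma_{j}^{-1}$ depends only on the coset $\sigma_{j}G_{i}=j$, and as $j$ ranges over $[n]=G/G_{i}$ these conjugates exhaust every $G$-conjugate of $H$. Hence
\[
\Stab_{G}(\bar{y})=\bigcap_{\sigma\in G}\sigma H\sigma^{-1},
\]
the largest normal subgroup of $G$ contained in $H$.

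Finally, a normal subgroup $N\vartriangleleft G$ is contained in $H$ iff $\langle G_{i},N\rangle\subset H$ iff $L^{H}\subset L^{\langle G_{i},N\rangle}$ iff $w\in W(L^{\langle G_{i},N\rangle})$. Therefore $\Stab_{G}(\bar{y})=1$ iff for every nontrivial $N\vartriangleleft G$ one has $w\notin W(L^{\langle G_{i},N\rangle})$, which by Definition \ref{def: original} is exactly the condition that $w$ be original. I expect the most delicate step to be the identification $\Stab_{G}(\bar{y})=\bigcap_{\sigma}\sigma H\sigma^{-1}$: it rests on the inclusion $G_{i}\subset H$, which is precisely what allows the $n$ conjugates $\{\sigma_{j}H\sigma_{j}^{-1}\}_{j=1}^{n}$ to already cover the entire conjugacy class of $H$ in $G$.
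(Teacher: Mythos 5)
Your proof is correct and follows essentially the same route as the paper's: both translate the condition $y\in Y^{\circ}$ into a descent condition on $w=\psi_{i}(y)$ via the Galois correspondence, with the normal core / normal closure doing the work of passing between the single stabilizer $G_{i}$ and all of its conjugates. Your packaging --- computing $\Stab_{G}(\bar{y})$ exactly as $\bigcap_{\sigma\in G}\sigma H\sigma^{-1}$ for $H$ the group of the minimal field of definition of $w$ --- is simply a tidier organization of the two inclusions that the paper proves separately.
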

\begin{proof}
Suppose that $\psi_{i}(y)$ is not original for some $i$, and hence
for all $i$. There exists a non-trivial normal subgroup $H$ of $G$
such that for every $i$, $\psi_{i}(y)\in W(L^{\left\langle G_{i},H\right\rangle })$.
Let us write $y=\prod_{i=1}^{n}y_{i}$ with $y_{i}\in W(L)$. For
$h\in H$ and $i\in[n]$, we have 
\[
y_{hi}=y_{i}.
\]
Hence $y\in Y^{H}\subset Y\setminus Y^{\circ}$. 

Conversely, suppose that $y\in Y^{g}(L)$ for some $1\ne g\in G$.
For each $i$, $y_{i}:\Spec L\to W$ factors through $\Spec L^{g}$
and $\Spec L^{G_{i}}$, and hence $\Spec L^{g}\cap L^{G_{i}}$. Since,
for each $i,j\in[n]$, we have the commutative diagram
\[
\xymatrix{\Spec L^{G_{i}}\ar[d]_{\cong}\ar[rr]^{\psi_{i}(x)} &  & W\\
\Spec L^{G_{j}}\ar[urr]_{\psi_{j}(x)} &  & ,
}
\]
each morphism $\psi_{i}(x)$ factors also through $\Spec L^{h}\cap L^{G_{i}}$
for all elements $h$ conjugate to $g$. In consequence, $\psi_{i}(x)$
factors through $\Spec L^{\left\langle G_{i},H\right\rangle }$ with
$H$ the normal closure of $\{g\}$. \end{proof}
\begin{defn}
We put
\begin{align*}
W[K]_{G}: & =\bigsqcup_{L\in\Fie GK}\frac{W(L^{G_{1}})}{Z(G)},\\
W[K]_{G}^{\orig}: & =\bigsqcup_{L\in\Fie GK}\frac{W(L^{G_{1}})^{\orig}}{Z(G)}.
\end{align*}
\end{defn}
\begin{lem}
\label{lem: freeness Z(G)-action}For each $L\in\Fie GK$, the $Z(G)$-action
on $W(L^{G_{1}})^{\orig}$ is free.\end{lem}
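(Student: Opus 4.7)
The plan is to assume that some $z\in Z(G)$ fixes a point $x\in W(L^{G_{1}})^{\orig}$ and deduce $z=1$. The essential ingredient is that when $z$ is central in $G$, the cyclic subgroup $\langle z\rangle$ is automatically normal in $G$, so one may feed $H:=\langle z\rangle$ into the defining condition of ``original.''

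Concretely, since every $z\in Z(G)$ commutes with $G_{1}$, the action of $z$ on $L$ restricts to a $K$-automorphism of the subfield $L^{G_{1}}$, and $Z(G)$ acts on $W(L^{G_{1}})$ through this restricted action. By Galois correspondence, the fixed subfield of $z$ inside $L^{G_{1}}$ is
\[
(L^{G_{1}})^{\langle z\rangle}=L^{G_{1}}\cap L^{\langle z\rangle}=L^{\langle G_{1},\langle z\rangle\rangle}.
\]
If $z\cdot x=x$, then $x\colon\Spec L^{G_{1}}\to W$ factors through $\Spec(L^{G_{1}})^{\langle z\rangle}$, so $x$ belongs to the subset $W(L^{\langle G_{1},\langle z\rangle\rangle})\subset W(L^{G_{1}})$.

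Since $z$ is central, $\langle z\rangle$ is a normal subgroup of $G$. Appealing to the defining condition of originality with $H:=\langle z\rangle$, the assumption $x\in W(L^{G_{1}})^{\orig}$ then forces $\langle z\rangle=1$, i.e., $z=1$, which is exactly freeness of the action. I do not anticipate a real obstacle here; the only point that warrants verification is why $Z(G)$ acts on $W(L^{G_{1}})$ at all, which is precisely because $Z(G)\subset G$ centralizes, and hence normalizes, the subgroup $G_{1}$ stabilizing $L^{G_{1}}$.
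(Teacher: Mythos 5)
Your proposal is correct and is essentially the paper's own argument: the paper likewise observes that for $1\ne g\in Z(G)$ an original point cannot factor through $\Spec L^{\left\langle G_{1},g\right\rangle}$ (phrased via the image of $w^{*}:\cO_{W,\bar{w}}\to L^{G_{1}}$ not lying in $L^{\left\langle G_{1},g\right\rangle}$), which is exactly your contrapositive using $H=\langle z\rangle\vartriangleleft G$. Your added remarks on why $Z(G)$ acts on $W(L^{G_{1}})$ and the identification $(L^{G_{1}})^{\langle z\rangle}=L^{\langle G_{1},\langle z\rangle\rangle}$ are correct details left implicit in the paper.
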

\begin{proof}
Let $g\in Z(G)$. By definition, any original point $w:\Spec L^{G_{1}}\to W$
does not factor through $\Spec L^{\left\langle G_{1},g\right\rangle }$.
Namely, if $\bar{w}$ is the image of the unique point of $\Spec L^{G_{1}}$
by the morphism $w$, then the image of 
\[
w^{*}:\cO_{W,\bar{w}}\to L^{G_{1}}
\]
is not contained in $L^{\left\langle G_{1},g\right\rangle }$. This
shows $g\cdot w\ne w$. 
\end{proof}
The maps $\psi_{1}$ for all $L\in\Fie GK$ induce a map
\[
\psi:Y\{K\}\to W[K]_{G}.
\]
Restricting it, we obtain a bijection, which we denote by the same
symbol: 
\[
\psi:Y^{\circ}\{K\}\to W[K]_{G}^{\orig}.
\]

\begin{prop}
\label{prop: H(y)=00003DH(psi(w))}Let $D$ be a metrized Cartier
divisor on $W$ and define a metrized Cartier divisor on $Y$ by 
\[
E:=\sum_{i=1}^{n}p_{i}^{*}D.
\]
For $y\in Y\{K\},$
\[
H_{E}(y)=H_{D}(\psi(y))^{\sharp G}.
\]
\end{prop}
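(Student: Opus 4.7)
The plan is to expand the $n$-fold sum defining $E$ into $n$ height factors, and then use the transitivity of $G$ on $[n]$ to show that all $n$ factors reduce to the same power of $H_D(\psi(y))$. Pick a representative $y:\Spec L\to Y=W^n$ of the given class in $Y\{K\}$, where $L\in\Fie GK$. By parts (1) and (2) of Lemma~\ref{lem: properties height},
$$H_E(y)=\prod_{i=1}^{n}H_{p_i^{*}D}(y)=\prod_{i=1}^{n}H_D(p_i\circ y).$$

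Next I would exploit the $G$-equivariance of $y$. Since $p_i\circ y$ is $G_i$-invariant, it factors through $\Spec L^{G_i}$ via $\psi_i(y)$; that is,
$$(p_i\circ y)\;=\;\bigl(\Spec L\twoheadrightarrow\Spec L^{G_i}\xrightarrow{\psi_i(y)}W\bigr),$$
with the first arrow of degree $[L:L^{G_i}]=\sharp G_i$. Applying part (3) of Lemma~\ref{lem: properties height} to this factorization gives
$$H_D(p_i\circ y)=H_D(\psi_i(y))^{\sharp G_i}.$$

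To conclude I would show $H_D(\psi_i(y))$ is independent of $i$. For any $g\in G$ with $g\cdot 1=i$, restriction of $g$ yields a $K$-algebra isomorphism $L^{G_1}\xrightarrow{\sim}L^{G_i}$, and by the construction of $\alpha_{1i}$ in Section~\ref{subsec:permutation-actions} the point $\psi_i(y)$ corresponds under this isomorphism to $\psi_1(y)$. Because the height function on $W(M)$ defined in Section~\ref{sec:Heights} depends only on $M$ as a $K$-algebra---any $K$-algebra isomorphism $M\xrightarrow{\sim}M'$ identifies places and $v$-adic norms---we obtain $H_D(\psi_i(y))=H_D(\psi_1(y))$ for every $i$. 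Since all $G_i$ are conjugate, $\sharp G_i=\sharp G/n$, and therefore
$$H_E(y)=H_D(\psi_1(y))^{\sum_{i=1}^{n}\sharp G_i}=H_D(\psi_1(y))^{n\cdot\sharp G/n}=H_D(\psi(y))^{\sharp G}.$$
The same $K$-algebra invariance verifies that both sides descend modulo the $Z(G)$-actions that define $Y\{K\}$ and $W[K]_G$, so the identity is independent of the chosen representative. I expect the only mildly subtle step to be this invariance of $H_D$ under $K$-algebra isomorphisms of the field of definition; it is essentially tautological from the product-formula definition of height, but it is the one place where something beyond the formal manipulations in Lemma~\ref{lem: properties height} is being used, so it merits being stated explicitly.
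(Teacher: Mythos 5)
Your proof is correct and takes essentially the same route as the paper's: the paper precomposes $y$ with $\sigma_i\in G$ (where $\sigma_i(1)=i$) so that each factor reads off $\psi(y)$ over $L^{G_1}$ directly, which is exactly your invariance of $H_D$ under the $K$-isomorphism $L^{G_1}\cong L^{G_i}$ packaged one step earlier. Nothing further is needed.
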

\begin{proof}
Let $\sigma_{i}\in G$ be such that $\sigma_{i}(1)=i$. We have the
following commutative diagram.
\[
\xymatrix{\Spec L\ar[r]^{y\circ\sigma_{i}}\ar[d] & W^{n}\ar[d]^{p_{i}}\\
\Spec L^{G_{1}}\ar[r]_{\psi(y)} & W
}
\]
From Lemma \ref{lem: properties height}, 
\[
H_{p_{i}^{*}D}(y)=H_{p_{i}^{*}D}(y\circ\sigma_{i})=H_{D}(p_{i}\circ y\circ\sigma_{i})=H_{D}(\psi(y))^{\sharp G_{1}}.
\]
Since $\sharp G=n\cdot\sharp G_{1}$,
\[
H_{E}(y)=\prod_{i=1}^{n}H_{p_{i}^{*}D}(y)=H_{D}(\psi(y))^{\sharp G},
\]
as desired.
\end{proof}
For such a divisor $E$ as in the lemma, we define a $\QQ$-Cartier
divisor $\bar{E}$ on $X$ as $\frac{1}{\sharp G}(\pi_{*}E)$ and
metrize it so that $\pi^{*}\bar{E}=E$ as metrized divisors. 
\begin{rem}
Indeed we can always metrize $\bar{E}$ in this way. The divisor $E$
corresponds to a $G$-invertible sheaf $\cL$ on $Y$. Hence the divisor
$\sharp G\cdot\bar{E}=\pi_{*}E$ is a Cartier divisor corresponding
to the $G$-invariant part of $\pi_{*}(\cL{}^{\otimes\sharp G})$,
which naturally inherits the metric on $\cL{}^{\otimes\sharp G}$.
This defines a metric on $\bar{E}$ satisfying the desired property.
\end{rem}
Consider the bijection 
\begin{equation}
\tau:=\psi\circ(\pi_{*})^{-1}:X(K)^{\prim}\to W[K]_{G}^{\orig}.\label{eq: tau}
\end{equation}
From Propositions \ref{prop:H(y)=00003DH(pi(y))} and \ref{prop: H(y)=00003DH(psi(w))},
we obtain the following corollary.
\begin{cor}
\label{cor: H(x)=00003DH(tau(x))}For $x\in X(K)^{\prim}$, 
\[
H_{\bar{E}}(x)=H_{D}(\tau(x)).
\]

\end{cor}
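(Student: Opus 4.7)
The plan is to simply chain the two previous propositions through the defining relation $\pi^{*}\bar{E}=E$. Given $x\in X(K)^{\prim}$, set $y:=(\pi_{*})^{-1}(x)\in Y^{\circ}\{K\}$, so that by construction $\tau(x)=\psi(y)$. The proof then becomes a two-line computation, and the only thing to verify carefully is that the metrics match up as one passes between $E$, $\pi^{*}\bar{E}$, and $\bar{E}$.

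First I would apply Proposition \ref{prop:H(y)=00003DH(pi(y))} to the metrized $\QQ$-Cartier divisor $\bar{E}$ on $X$, which gives
\[
H_{\pi^{*}\bar{E}}(y)^{1/\sharp G}=H_{\bar{E}}(\pi_{*}(y))=H_{\bar{E}}(x).
\]
By the choice of metric on $\bar{E}$ (made precisely so that $\pi^{*}\bar{E}=E$ as metrized divisors), we have $H_{\pi^{*}\bar{E}}(y)=H_{E}(y)$. Next I would apply Proposition \ref{prop: H(y)=00003DH(psi(w))} to obtain
\[
H_{E}(y)=H_{D}(\psi(y))^{\sharp G}=H_{D}(\tau(x))^{\sharp G}.
\]
Combining the two displays and taking a $\sharp G$-th root yields $H_{\bar{E}}(x)=H_{D}(\tau(x))$, which is the desired equality.

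Strictly speaking, one should check that the formulas above are independent of the choice of representative $y\in Y(L)^{G}$ of its $Z(G)$-orbit, but this is immediate: the $Z(G)$-action is by $K$-automorphisms of $L$, and the heights $H_{\pi^{*}\bar{E}}(y)$ and $H_{D}(\psi(y))$ depend only on the image points up to such automorphisms. There is no substantial obstacle here; the ``main step,'' if there is one, is simply to unpack the compatibility of the metric on $\bar{E}$ with $\pi^{*}\bar{E}=E$, which was already arranged in the remark immediately preceding the corollary, and to invoke both propositions with the correct divisor.
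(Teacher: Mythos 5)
Your proof is correct and is exactly the argument the paper intends: the corollary is stated as an immediate consequence of Propositions \ref{prop:H(y)=00003DH(pi(y))} and \ref{prop: H(y)=00003DH(psi(w))}, chained through the metric compatibility $\pi^{*}\bar{E}=E$, which is precisely what you do. The remark about independence of the choice of $Z(G)$-orbit representative is a sensible extra check but not a substantive addition.
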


\section{Height zeta functions and possible applications}

\subsection{Height zeta functions}

The height zeta function is the main analytic tool in studies of the
density of rational points, and also in our study of relating it with
the density of number fields. For details of height zeta functions,
we refer the reader to \cite{MR2647601}.
\begin{defn}
Let $S$ be a $K$-variety endowed with a height function $H$. For
a certain set $U$ of points of $S$, we define the \emph{height zeta
function }of $U$ to be
\[
Z_{U}(s):=\sum_{x\in U}H(x)^{-s},
\]
for $s\in\CC$ whenever the series absolutely converges. 
\end{defn}
Let $N_{U}(B):=\sharp\{x\in U\mid H(x)\le B\}$. If $a\in\RR$ is
the abscissa of absolute convergence of $Z_{U}(s)$ and if $Z_{U}(s)$
has a meromorphic continuation to a neighborhood of $s=a$, then a
version of Tauberian theorem implies that 
\[
N_{U}(B)\sim c\cdot B^{a}\cdot(\log B)^{b-1},
\]
where $b$ is the order of the pole of $Z_{U}(s)$ at $s=a$ and 
\[
c=a\cdot(b-1)!\cdot\lim_{s\to a}(s-a)^{b}\cdot Z_{U}(s).
\]

From now on, we suppose that projective spaces $\PP^{m}$ have anti-canonical
divisors metrized as in Section \ref{subsec:Projective-spaces} and
the associated height functions. We need the following precise description
of the height zeta function of a projective space.
\begin{thm}[\cite{Franke:1989go}]
Let $L$ be a number field. The height zeta function $Z_{\PP^{m}(L)}(s)$
absolutely converges for $\Re(s)>1$ and has a meromorphic continuation
to the whole $s$-plane. Moreover it has a simple pole at $s=1$ whose
residue is equal to $\delta_{L,m}$ defined in (\ref{eq: delta def}). 
\end{thm}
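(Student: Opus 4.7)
The plan is to exhibit $Z_{\PP^{m}(L)}(s)$ as an explicit product/quotient of known analytic objects, namely the Dedekind zeta function $\zeta_{L}$ and an archimedean Mellin transform. Both the meromorphic continuation and the residue formula will fall out of this factorization together with the analytic class number formula.

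\textbf{Step 1 (Parametrization).} Fix representatives $I_{1},\dots,I_{h}$ of the ideal class group. Every $x\in\PP^{m}(L)$ admits a representative tuple $(x_{0},\dots,x_{m})\in I_{j}^{m+1}$ with $(x_{0},\dots,x_{m})\cO_{L}=I_{j}$, uniquely determined up to the diagonal action of $\cO_{L}^{\times}$. The product formula then collapses the non-archimedean contribution to the height, yielding
\[
H(x)=N(I_{j})^{-(m+1)}\prod_{v\mid\infty}\max_{i}\left\Vert x_{i}\right\Vert _{v}^{m+1}.
\]

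\textbf{Step 2 (M\"obius inversion).} Separate the sum by class and remove the primitivity condition $(x_{0},\dots,x_{m})\cO_{L}=I_{j}$ by M\"obius inversion over integral ideals of $\cO_{L}$. This extracts a factor $\zeta_{L}(s(m+1))^{-1}$ and leaves an unrestricted sum over nonzero $(x_{0},\dots,x_{m})\in I_{j}^{m+1}$ modulo $\cO_{L}^{\times}$.

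\textbf{Step 3 (Archimedean Mellin transform).} The remaining inner sum is a lattice-point problem on $(L\otimes_{\QQ}\RR)^{m+1}$, modulo the free part of $\cO_{L}^{\times}$ (of rank $r_{1}+r_{2}-1$) acting through the Dirichlet logarithmic embedding. Building a fundamental domain for this action and evaluating the Mellin transform of $\prod_{v\mid\infty}\max_{i}\left\Vert x_{i}\right\Vert _{v}^{-s(m+1)}$ on it produces, step by step, the archimedean constants $(2^{r_{1}}(2\pi)^{r_{2}})^{m+1}$ (from integration over the local factors), $(m+1)^{r_{1}+r_{2}-1}$ (a Jacobian for the logarithmic coordinates), the regulator $R$ (volume of the fundamental domain of units), and $d^{-(m+1)/2}$ (covolume of $I_{j}$ normalized by $N(I_{j})$).

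\textbf{Step 4 (Combine and conclude).} Summing over ideal classes $j=1,\dots,h$ contributes the class number $h$, while dividing by $\#(\cO_{L}^{\times})_{\mathrm{tors}}=w$ accounts for the torsion units. The result is a factorization
\[
Z_{\PP^{m}(L)}(s)=\frac{A(s)}{\zeta_{L}(s(m+1))},
\]
with $A(s)$ an explicit archimedean object admitting meromorphic (indeed essentially entire) continuation to $\CC$. Classical analytic properties of $\zeta_{L}$ (Hecke's functional equation, together with $\zeta_{L}(s)\neq0$ on $\Re(s)\ge1$ away from $s=1$) then give the meromorphic continuation of $Z_{\PP^{m}(L)}(s)$ to the whole plane. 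The only pole on $\Re(s)\ge1$ comes from $A(s)$ at $s=1$, it is simple, and applying the analytic class number formula
\[
\Res_{s=1}\zeta_{L}(s)=\frac{2^{r_{1}}(2\pi)^{r_{2}}\cdot h\cdot R}{w\cdot\sqrt{d}}
\]
at $\zeta_{L}(s(m+1))$ via $s=1$ only enters indirectly (through the cancellation of $w$, $h$, $R$ in the numerator/denominator bookkeeping), the accumulated constants collapse to exactly $\delta_{L,m}$ as defined in (\ref{eq: delta def}).

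\textbf{Main obstacle.} The genuine work is entirely in Step 3: one must carry through the archimedean Mellin transform and keep scrupulous track of the normalizing constants arising from the local factors $\left\Vert \cdot\right\Vert _{v}$ (with $\left\Vert \cdot\right\Vert _{v}=\left|\cdot\right|_{\RR}$ at real places but $\left\Vert \cdot\right\Vert _{v}=\left|\cdot\right|_{\CC}^{2}$ at complex places), the Jacobian of the logarithmic embedding, and the covolume of the lattice $I_{j}\subset L\otimes_{\QQ}\RR$. This is where the delicate shape of $\delta_{L,m}$ must be \emph{verified} rather than merely bounded; once this explicit identity is in hand, the meromorphic continuation and the residue at $s=1$ are formal consequences of the analytic theory of $\zeta_{L}$.
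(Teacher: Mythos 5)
The paper does not prove this theorem; it is quoted verbatim from \cite{Franke:1989go}, so there is no internal proof to match against. Your sketch is the classical Schanuel-style route (ideal-class parametrization, M\"obius inversion extracting $\zeta_{L}(s(m+1))^{-1}$, archimedean lattice-point count modulo units), and Steps 1, 2 and 4 are set up correctly: this is exactly how one obtains absolute convergence for $\Re(s)>1$, the factorization $Z_{\PP^{m}(L)}(s)=A(s)/\zeta_{L}(s(m+1))$, and the residue $\delta_{L,m}$ at the simple pole $s=1$. Two corrections, one minor and one substantive. Minor: the analytic class number formula plays no role here. The factors $h$, $R$, $w$ enter directly from the parametrization and the unit-lattice fundamental domain, and $\zeta_{L}(m+1)$ is a convergent special value coming from evaluating the M\"obius factor at $s=1$ (note $m+1\ge 2$); your sentence about the class number formula ``entering indirectly'' is confused and should simply be deleted.

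Substantive: the claim that $A(s)$ is ``essentially entire'' is asserted, not proved, and it carries the entire weight of the meromorphic continuation to the whole $s$-plane, which is part of the statement. The elementary approach (a Mellin transform of $\sum_{\lambda}e^{-tF(\lambda)}$ with $F$ a product of sup-norms) readily gives holomorphy for $\Re(s)>1$ and, with an error term in the lattice-point count, continuation slightly past $\Re(s)=1$ — enough for the Tauberian argument and the residue — but continuation to all of $\CC$ requires either a full asymptotic expansion of the theta-like sum as $t\to0$ (nontrivial for a sup-norm, whose unit ball has a non-smooth boundary) or the route actually taken in \cite{Franke:1989go}, namely realizing $Z_{\PP^{m}(L)}$ via Eisenstein series on $\mathrm{PGL}_{m+1}$ and invoking Langlands' analytic continuation. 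As written, your proof establishes convergence, the simple pole, and the residue, but not the whole-plane continuation; you should either supply the asymptotic expansion of the archimedean integral or defer that one clause to the Eisenstein-series argument of the cited source.
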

In what follows, we suppose that $G$ is a transitive subgroup of
$S_{n}$ and $m\cdot\ind(G)\ge2$. We suppose that the quotient variety
$X:=(\PP^{m})^{n}/G$ is given the metrized anti-canonical divisor
such that we have an equality of metrized divisors, 
\[
\pi^{*}(-K_{X})=\sum p_{i}^{*}(-K_{\PP^{m}}),
\]
where $\pi:(\PP^{m})^{n}\to X$ is the quotient map and $p_{i}:(\PP^{m})^{n}\to\PP^{m}$
is the $i$-th projection. From Lemma \ref{lem: freeness Z(G)-action}
and Corollary \ref{cor: H(x)=00003DH(tau(x))}, we obtain the following
result.
\begin{thm}
\label{thm: zeta equal permutation}We have 
\[
Z_{X(K)^{\prim}}(s)=\frac{1}{\sharp Z(G)}\sum_{L\in\Fie GK}Z_{\PP^{m}(L^{G_{1}})^{\orig}}(s),
\]
whenever the series $Z_{X(K)^{\prim}}(s)$ absolutely converges. 
\end{thm}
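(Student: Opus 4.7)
The plan is to transport the summation defining $Z_{X(K)^{\prim}}(s)$ across the bijection $\tau$ of (\ref{eq: tau}) and then unpack the disjoint-union description of $W[K]_{G}^{\orig}$, where $W=\PP^{m}$.

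First I would identify the relevant metrized divisors. The divisor $E:=\sum p_{i}^{*}(-K_{\PP^{m}})$ on $(\PP^{m})^{n}$ coincides with $\pi^{*}(-K_{X})$ as a metrized divisor by hypothesis, so in the language of Section \ref{subsec:permutation-actions} one has $\bar{E}=-K_{X}$. Applying Corollary \ref{cor: H(x)=00003DH(tau(x))} with $D=-K_{\PP^{m}}$ gives, for each $x\in X(K)^{\prim}$,
\[
H(x)=H_{\bar{E}}(x)=H_{D}(\tau(x)).
\]
Since $\tau:X(K)^{\prim}\to W[K]_{G}^{\orig}$ is a bijection, reindexing the sum yields
\[
Z_{X(K)^{\prim}}(s)=\sum_{y\in W[K]_{G}^{\orig}}H_{D}(y)^{-s},
\]
and absolute convergence is preserved because this is merely a bijective reindexing.

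Next I would unfold the definition
\[
W[K]_{G}^{\orig}=\bigsqcup_{L\in\Fie{G}{K}}\frac{W(L^{G_{1}})^{\orig}}{Z(G)}.
\]
By Lemma \ref{lem: freeness Z(G)-action}, for each large $G$-field $L$ the $Z(G)$-action on $W(L^{G_{1}})^{\orig}$ is free, so every $Z(G)$-orbit contains exactly $\sharp Z(G)$ elements. The one input I need to verify (and the only place something nontrivial happens) is that the height $H_{D}$ on $W(L^{G_{1}})$ is constant on $Z(G)$-orbits: this holds because $Z(G)$ acts on $W(L^{G_{1}})$ through $K$-automorphisms of $L^{G_{1}}$, and heights of points of a $K$-variety are invariant under the induced $\Gal(\bar{K}/K)$-action, since such an automorphism merely permutes the places of $L^{G_{1}}$ lying over each place of $K$ while preserving the local norms appearing in the definition of $H_{D}$.

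Granted this invariance, each orbit sum contributes $\sharp Z(G)$ times the value of $H_{D}(y)^{-s}$ on any representative, so
\[
\sum_{[y]\in W(L^{G_{1}})^{\orig}/Z(G)}H_{D}([y])^{-s}=\frac{1}{\sharp Z(G)}\sum_{y\in W(L^{G_{1}})^{\orig}}H_{D}(y)^{-s}=\frac{1}{\sharp Z(G)}\,Z_{\PP^{m}(L^{G_{1}})^{\orig}}(s).
\]
Summing over $L\in\Fie{G}{K}$ and combining with the first display yields the stated equality. No step is a serious obstacle; the only point that deserves care is the $Z(G)$-invariance of the height, which I would justify in one line from the product-formula definition of $H_{D}$ together with standard Galois invariance of local norms.
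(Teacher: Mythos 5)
Your proposal is correct and follows exactly the route the paper intends: the paper derives the theorem directly from Lemma \ref{lem: freeness Z(G)-action} and Corollary \ref{cor: H(x)=00003DH(tau(x))}, i.e.\ reindexing along the bijection $\tau$, unfolding $W[K]_{G}^{\orig}$ as a disjoint union of free $Z(G)$-quotients, and dividing by $\sharp Z(G)$. Your one added verification --- that $H_{D}$ is constant on $Z(G)$-orbits because $Z(G)$ acts through $K$-automorphisms of $L^{G_{1}}$ and heights are Galois-invariant --- is exactly the point the paper leaves implicit, and your justification of it is sound.
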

The equality can be considered as a \emph{mass formula }of (large
or small) $G$-fields, a formula for counting $G$-fields with weight
$\frac{1}{\sharp Z(G)}\cdot Z_{\PP^{m}(L^{G_{1}})^{\orig}}(s)$. This
is also a global analogue of the wild McKay correspondence \cite{Wood-Yasuda-I,MR3230848,Yasuda:2013fk,Yasuda:2014fk}
in the case of local fields.
\begin{rem}
In the situation of Section \ref{sub:General-Galois-covers}, we similarly
have
\[
Z_{X(K)^{\prim}}(s)=\frac{1}{\sharp Z(G)}\cdot\sum_{L\in\Fie GK}Z_{Y^{\circ}(L)^{G}}(s).
\]
Using this equality instead of the one above, we might be able to
apply arguments below to counting of $G$-fields with respect to various
weights other than powers of discriminants, as proposed in \cite[Section 4.2]{Ellenberg:2005bn}.
For this purpose, we need to know the height zeta function $Z_{Y^{\circ}(L)^{G}}(s)$
as well as we know now the usual height zeta function of a projective
space. 
\end{rem}
Now we show that function $Z_{\PP^{m}(L^{G_{1}})\orig}(s)$ has as
nice properties as $Z_{\PP^{m}(L^{G_{1}})}(s)$ does. 
\begin{defn}
Define the following set of subgroups of $G$
\[
A(G):=\{\left\langle G_{1},H\right\rangle \mid H\vartriangleleft G\},
\]
For $I\in A(G)$, we call a point $x\in W(L^{I})$ \emph{original
}if $x$ is not in the image of $W(L^{J})$ for any $J\in A(G)$ with
$I\subsetneq J$. We denote by $W(L^{I})^{\orig}$ the set of original
$L^{I}$-points. 
\end{defn}
For $I=G_{1}$, this definition of original coincides with Definition
\ref{def: original}. Obviously we have
\[
\PP^{m}(L^{G_{1}})=\bigsqcup_{I\in A(G)}\overline{\PP^{m}(L^{I})^{\orig}},
\]
where $\overline{\PP^{m}(L^{I})^{\orig}}$ is the image of $\PP^{m}(L^{I})^{\orig}$
by the natural injection $\PP^{m}(L^{I})^{\orig}\to\PP^{m}(L^{G_{1}})$.
From Lemma \ref{lem: properties height}, 
\[
Z_{\PP^{m}(L^{G_{1}})}(s)=\sum_{I\in A(G)}Z_{\PP^{m}(L^{I})^{\orig}}([I:G_{1}]\cdot s).
\]
Similarly, for each $I\in A(G)$, 
\[
Z_{\PP^{m}(L^{I})}([I:G_{1}]\cdot s)=\sum_{\substack{J\in A(G)\\
I\subset J
}
}Z_{\PP^{m}(L^{J})^{\orig}}([J:G_{1}]\cdot s).
\]

\begin{prop}
We have 
\begin{align*}
Z_{\PP^{m}(L^{G_{1}})^{\orig}}(s) & =\sum_{I\in A(G)}\mu(I)\cdot Z_{\PP^{m}(L^{I})}([I:G_{1}]\cdot s)\\
 & =Z_{\PP^{m}(L^{G_{1}})}(s)+\sum_{\substack{I\in A(G)\\
I\ne G_{1}
}
}\mu(I)\cdot Z_{\PP^{m}(L^{I})}([I:G_{1}]\cdot s),
\end{align*}
where $\mu(I)=\mu(G_{1},I)\in\ZZ$ is the Möbius function for the
poset $A(G)$.\end{prop}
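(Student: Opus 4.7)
The plan is to obtain the proposition by Möbius inversion on the finite poset $A(G)$ (ordered by inclusion), applied to the identity displayed immediately before the statement:
\[
Z_{\PP^{m}(L^{I})}([I:G_{1}]\cdot s)=\sum_{\substack{J\in A(G)\\ I\subset J}}Z_{\PP^{m}(L^{J})^{\orig}}([J:G_{1}]\cdot s),
\]
valid for every $I\in A(G)$. Setting $f(I):=Z_{\PP^{m}(L^{I})^{\orig}}([I:G_{1}]\cdot s)$ and $g(I):=Z_{\PP^{m}(L^{I})}([I:G_{1}]\cdot s)$, this identity is exactly the hypothesis $g(I)=\sum_{J\supset I}f(J)$ for Möbius inversion in the upward direction on $A(G)$.

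Since $G$ is finite, $A(G)$ is finite, and every element of $A(G)$ contains $G_{1}$ (each such subgroup has the form $\langle G_{1},H\rangle$), so $G_{1}$ is the unique minimum of $A(G)$. Standard Möbius inversion then yields
\[
f(G_{1})=\sum_{J\in A(G)}\mu(G_{1},J)\cdot g(J),
\]
which, after unwinding the definitions of $f$ and $g$ and writing $\mu(J):=\mu(G_{1},J)$, is precisely the first displayed formula of the proposition. The second displayed formula is obtained by isolating the $J=G_{1}$ summand: one has $\mu(G_{1},G_{1})=1$ by the defining recursion of the Möbius function, and $[G_{1}:G_{1}]=1$, so this term contributes exactly $Z_{\PP^{m}(L^{G_{1}})}(s)$.

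There is no serious obstacle here, since the hard combinatorial work has already been done in the identity displayed before the statement. The only point requiring a remark is convergence: both sides of the proposition are finite linear combinations of Dirichlet series of the form $Z_{\PP^{m}(L^{I})}([I:G_{1}]\cdot s)$, and by Franke's theorem each such series converges absolutely for $\Re(s)>1/[I:G_{1}]$. In particular, both sides converge absolutely in the common half-plane $\Re(s)>1$, and the rearrangement carried out by Möbius inversion is justified term by term.
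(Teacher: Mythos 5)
Your proof is correct and is essentially identical to the paper's, which simply cites the M\"obius inversion formula applied to the identity $Z_{\PP^{m}(L^{I})}([I:G_{1}]\cdot s)=\sum_{J\supset I}Z_{\PP^{m}(L^{J})^{\orig}}([J:G_{1}]\cdot s)$ displayed just before the statement. Your additional remarks on $G_{1}$ being the minimum of $A(G)$ and on absolute convergence for $\Re(s)>1$ are correct and only make explicit what the paper leaves implicit.
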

\begin{proof}
This follows from the Möbius inversion formula (see \cite[Proposition 3.7.2]{MR1442260}).
\end{proof}
The following is a direct consequence of the proposition.
\begin{cor}
For $L\in\Fie GK$, the function $Z_{\PP^{m}(L^{G_{1}})^{\orig}}(s)$
absolutely converges for $\Re(s)>1$, admits a meromorphic continuation
to the whole $s$-plane, and has a simple pole at $s=1$ with residue
$\delta_{L^{G_{1}},m}$. \end{cor}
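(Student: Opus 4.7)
The corollary is presented as a direct consequence of the preceding M\"obius-inversion identity
\[
Z_{\PP^{m}(L^{G_{1}})^{\orig}}(s) = Z_{\PP^{m}(L^{G_{1}})}(s) + \sum_{\substack{I\in A(G)\\ I\ne G_{1}}} \mu(I)\cdot Z_{\PP^{m}(L^{I})}([I:G_{1}]\cdot s),
\]
so the plan is simply to transfer each property claimed for the left-hand side from the known properties of the building blocks $Z_{\PP^{m}(F)}(s)$ supplied by the cited theorem of Franke. The only structural input needed is the observation that every $I\in A(G)$ with $I\ne G_{1}$ strictly contains $G_{1}$, hence $[I:G_{1}]\ge 2$.

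\textbf{Step 1 (absolute convergence).} For each $I\in A(G)$, Franke's theorem gives absolute convergence of $Z_{\PP^{m}(L^{I})}(s')$ for $\Re(s')>1$, equivalently absolute convergence of $Z_{\PP^{m}(L^{I})}([I:G_{1}]\cdot s)$ for $\Re(s)>1/[I:G_{1}]$. Since $[I:G_{1}]\ge 1$ in all cases, every term in the M\"obius sum converges absolutely for $\Re(s)>1$, so the finite linear combination does too.

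\textbf{Step 2 (meromorphic continuation and the pole at $s=1$).} Each summand inherits a meromorphic continuation to all of $\CC$ from Franke's theorem, so the right-hand side itself extends meromorphically to the whole plane. For the terms with $I\ne G_{1}$, the unique pole of $Z_{\PP^{m}(L^{I})}(s')$ lies at $s'=1$, so after the substitution $s'=[I:G_{1}]\cdot s$ the pole is located at $s=1/[I:G_{1}]\le 1/2$; in particular, each such term is holomorphic at $s=1$. Therefore the sole contribution to the behavior at $s=1$ comes from the $I=G_{1}$ term, namely $Z_{\PP^{m}(L^{G_{1}})}(s)$, which by Franke's theorem has a simple pole at $s=1$ with residue $\delta_{L^{G_{1}},m}$. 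Summing, $Z_{\PP^{m}(L^{G_{1}})^{\orig}}(s)$ has a simple pole at $s=1$ with the same residue $\delta_{L^{G_{1}},m}$.

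\textbf{Main obstacle.} There is essentially no obstacle: the corollary is a bookkeeping consequence of the preceding proposition together with Franke's theorem. The only point deserving a line of care is verifying that $I\ne G_{1}$ in $A(G)$ forces $[I:G_{1}]\ge 2$, which is immediate from the definition $A(G)=\{\langle G_{1},H\rangle\mid H\vartriangleleft G\}$ and the fact noted in Section \ref{subsec:permutation-actions} that a nontrivial normal subgroup of $G$ cannot lie in $G_{1}$; this shifts every auxiliary pole to $\Re(s)\le 1/2$ and isolates the simple pole at $s=1$ with the asserted residue.
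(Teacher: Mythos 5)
Your proposal is correct and is exactly the argument the paper intends: the corollary is stated as a direct consequence of the M\"obius-inversion proposition, and you have supplied the routine verification, including the one point that actually needs checking, namely that $I\ne G_{1}$ in $A(G)$ forces $[I:G_{1}]\ge 2$ (which the paper justifies by noting that a nontrivial normal subgroup of a transitive $G$ cannot lie in a point stabilizer). One tiny remark: you do not need the claim that $s'=1$ is the \emph{unique} pole of $Z_{\PP^{m}(L^{I})}(s')$ --- holomorphy of each shifted term at $s=1$ already follows from absolute convergence for $\Re(s')>1$ together with $[I:G_{1}]\ge 2$.
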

\begin{example}
For $G=S_{n}$, we have $A(G)=\{G_{1},S_{n}\}$. Hence 
\[
\PP^{m}(L^{G_{1}})^{\orig}=\PP^{m}(L^{G_{1}})\sqcup\overline{\PP^{m}(K)},
\]
and
\[
Z_{\PP^{m}(L^{G_{1}})^{\orig}}=Z_{\PP^{m}(L^{G_{1}})}(s)-Z_{\PP^{m}(K)}(ns).
\]

\end{example}

\subsection{The density of primitive points}

We continue to work in the following situation: $G\subset S_{n}$
is a transitive subgroup, $X=(\PP^{m})^{n}/G$ and $m\cdot\ind(G)\ge2$.
Let us consider the following more specific version of Conjecture
\ref{conj: Manin variant}.
\begin{conjecture}
\label{conj: Manin specific}There exists a positive constant $c$
such that 
\[
N_{X(K)^{\prim}}(B)\sim c\cdot B\cdot(\log B)^{\cd(X)}\quad(B\to\infty).
\]
\end{conjecture}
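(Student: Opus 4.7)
My plan is to combine Theorem~\ref{thm: zeta equal permutation} with analytic input from the counting of $G$-fields by discriminant, analyze the height zeta function $Z_{X(K)^{\prim}}(s)$ near its rightmost pole, and then apply a standard Tauberian theorem. By Theorem~\ref{thm: zeta equal permutation},
\[
Z_{X(K)^{\prim}}(s) = \frac{1}{\sharp Z(G)} \sum_{L \in \Fie G K} Z_{\PP^{m}(L^{G_{1}})^{\orig}}(s),
\]
and by the corollary above each summand admits meromorphic continuation to the whole plane and has a simple pole at $s=1$ with residue $\delta_{L^{G_{1}},m}$. The central task is therefore to establish analytic continuation of the full sum to a half-plane $\Re(s) > 1 - \epsilon$ (away from $s=1$) and to determine the order of the pole at $s=1$.

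First I would use Proposition~\ref{prop:Siegel-Brauer} to replace each $\delta_{L^{G_{1}},m}$ by a quantity essentially equal to $d_{L^{G_{1}}}^{-m/2}$ up to tamely varying factors, and Lemma~\ref{lem: phi-1} to convert the sum over large $G$-fields into a finite-to-one sum over small $G$-fields $F\in \fie G K$. Via the tower relation $d_{F} = |N_{K/\QQ}(D_{F/K})|\cdot d_{K}^{n}$, the formal residue at $s=1$ becomes a constant multiple of $\sum_{F\in\fie G K} |N_{K/\QQ}(D_{F/K})|^{-m/2}$. Abel summation connects this to $M_{G,K}(B)$: under Malle's conjecture (Conjecture~\ref{conj: Malle}) the generating Dirichlet series $\sum_{F}|N_{K/\QQ}(D_{F/K})|^{-t}$ has its rightmost singularity at $t = 1/\ind(G)$ with a pole of order $\beta(G,K) = \cd(X)$. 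When $m\cdot\ind(G) > 2$ we have $m/2 > 1/\ind(G)$, the residue sum converges absolutely, $Z_{X(K)^{\prim}}(s)$ has a simple pole at $s=1$, and the conjecture follows from a Tauberian theorem with $\cd(X) = 0$.

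The borderline case $m\cdot\ind(G) = 2$ is the heart of the problem: here the pole of the discriminant Dirichlet series sits exactly on top of the Schanuel pole $1/(s-1)$. I would view $Z_{X(K)^{\prim}}(s)$ as a double Dirichlet series indexed by pairs $(F,x)$ with $x\in\PP^{m}(F)^{\orig}$, separate the ranges of $d_{F}$ versus $H(x)$, and use uniform-in-$F$ refinements of Schanuel's theorem together with convexity bounds for Dedekind zeta functions over families of fields of bounded degree to show that the assembled series has a pole of order $\cd(X)+1$ at $s=1$ with a positive leading coefficient. The Tauberian theorem would then deliver the sought asymptotic $c\cdot B\cdot(\log B)^{\cd(X)}$.

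The main obstacle is precisely this last step: extracting the correct pole order in the borderline case requires substantially more than the asymptotic form of Malle's conjecture. One would need either (i) an \emph{analytic} strengthening of Malle, giving meromorphic continuation of the discriminant zeta function across the line $\Re(t) = 1/\ind(G)$ with the correct pole order, together with uniform bounds justifying the interchange of summation and continuation, or (ii) a geometric detour: take a resolution $Y \to X$ with the $\cd(X)$ crepant exceptional divisors lifting the Picard rank, verify that $Y$ falls within the scope of a Manin-type theorem (e.g.\ via an equivariant compactification or a toric/Hilbert-scheme model such as the one used for $G = S_{n}$, $m = 2$), and descend the answer back to $X$. A secondary technical point is to confirm that $X(K) \setminus X(K)^{\prim}$ is a thin subset of strictly lower order in the sense of Remark~\ref{rem: thin primitive}, so that $N_{X(K)^{\prim}}(B)$ really captures the expected asymptotic rather than a proper sub-asymptotic.
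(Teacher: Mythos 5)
The statement you are asked to prove is a \emph{conjecture} in the paper (a specialization of the Batyrev--Tschinkel variant of Manin's conjecture to $X=(\PP^{m})^{n}/G$); the paper offers no proof of it, and the closest thing to one is the conditional Theorem~\ref{thm: Malle implies Manin}, which derives it from Conjectures~\ref{conj: merom continuation}, \ref{conj: delta sum} and \ref{conj: Malle} only in the case $m\cdot\ind(G)>2$ (where $\cd(X)=0$). Your first two paragraphs essentially reproduce that conditional argument: the identity of Theorem~\ref{thm: zeta equal permutation}, the conversion of the residue sum into $\sum_{F}|N_{K/\QQ}(D_{F/K})|^{-m/2}$ via Proposition~\ref{prop:Siegel-Brauer} and Lemma~\ref{lem: phi-1}, and the convergence of that sum from Malle plus Lemma~\ref{lem: abscissa Dirichlet general}. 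That part of your plan is sound as a reduction, but it is not a proof: it rests on three unproven inputs, and one of them (Malle's conjecture) is actually false as stated, by the Kl\"uners counterexample the paper itself cites. Moreover, the interchange of the infinite sum over $L\in\Fie GK$ with meromorphic continuation --- which you correctly flag as needing ``uniform bounds'' --- is precisely the content of Conjectures~\ref{conj: merom continuation} and~\ref{conj: delta sum}; termwise continuation of each $Z_{\PP^{m}(L^{G_{1}})^{\orig}}(s)$ gives you nothing about the continuation of the sum, and no argument for this is supplied.

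The genuine gap is therefore twofold. First, even in the case $m\cdot\ind(G)>2$ you have only a conditional statement, not a proof, and you should say so explicitly rather than present it as the easy case. Second, in the borderline case $m\cdot\ind(G)=2$, where the conjecture predicts the extra factor $(\log B)^{\cd(X)}$ with $\cd(X)=\beta(G,K)>0$, your proposed routes (an analytic strengthening of Malle with controlled continuation past $\Re(t)=1/\ind(G)$, or a descent from a crepant resolution such as the Hilbert scheme for $G=S_{n}$, $m=2$) are research programs, not arguments: no mechanism is given for why the double Dirichlet series should acquire a pole of order exactly $\cd(X)+1$ with positive leading coefficient, and the paper's Remark~\ref{rem: m ind =00003D2} explicitly records that the author does not know how to carry out either direction of this step. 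As it stands, the proposal correctly identifies where the difficulty lies but does not close it, so the statement remains a conjecture.
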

\begin{rem}
The set of primitive points $X(K)^{\prim}$ is obtained by removing
a thin subset from $X(K)$ (Remark \ref{rem: thin primitive}). Removing
a thin subset in the context of Manin's conjecture is suggested already
in \cite[page 345]{MR2019019}. The whole set $X(K)$ would not satisfy
the same asymptotic formula as above. For instance, if a subgroup
$H\subset S$ is not transitive, then $(\PP^{m})^{n}/H$ has Picard
number $\ge2$ and the images of its $K$-points on $(\PP^{m})^{n}/S_{n}$
are expected to outnumber the primitive $K$-points. This would be
related to the phenomenon that étale extensions outnumber field extensions
(for instance, see \cite[pages 1034-1035]{Bhargava:2005ih}). For,
if we extend bijection (\ref{eq: tau}), general $K$-points of $X$
correspond to $E$-points of $W$ for an étale $K$-algebra $E$ of
degree $n$. 
\end{rem}
We would like to show partial implications between Conjecture \ref{conj: Manin specific}
and Malle's conjecture, by using the equality above of height zeta
functions and assuming that the following auxiliary conjectures hold. 
\begin{conjecture}
\label{conj: merom continuation}The height zeta function $Z_{X(K)^{\prim}}(s)$
absolutely converges for $\Re(s)>1$, admits a meromorphic continuation
to a neighborhood of $s=1$ with a pole at $s=1$.
\end{conjecture}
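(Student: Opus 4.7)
The plan is to deduce both claims from the equality
\[
Z_{X(K)^{\prim}}(s) \;=\; \frac{1}{\sharp Z(G)} \sum_{L\in \Fie{G}{K}} Z_{\PP^{m}(L^{G_{1}})^{\orig}}(s)
\]
of Theorem~\ref{thm: zeta equal permutation}, combined with the analytic properties of each individual summand recorded in the corollary immediately preceding the statement (absolute convergence for $\Re(s)>1$, meromorphic continuation to the whole plane, simple pole at $s=1$ with residue $\delta_{L^{G_{1}},m}$). The problem then reduces to controlling an \emph{infinite} sum of height zeta functions indexed by large $G$-fields.

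For absolute convergence on $\{\Re(s)>1\}$, I would start from Schanuel's estimate $N_{\PP^{m}(L^{G_{1}})}(B)\sim \delta_{L^{G_{1}},m}\cdot B$ and Abel summation to get, uniformly on compact subsets of $\{\Re(s)>1+\epsilon\}$, a bound of the form
\[
\bigl| Z_{\PP^{m}(L^{G_{1}})^{\orig}}(s) \bigr| \;\ll_{\epsilon}\; \delta_{L^{G_{1}},m}.
\]
Proposition~\ref{prop:Siegel-Brauer} replaces $\delta_{L^{G_{1}},m}$ by $d_{L^{G_{1}}}^{-m/2+\epsilon}$, and Lemma~\ref{lem: phi-1} lets one pass from large to small $G$-fields, so the outer sum is dominated by $\sum_{L'\in\fie{G}{K}} d_{L'}^{-m/2+\epsilon}$. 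Partial summation against any power-saving upper bound $M_{G,K}(B)\ll B^{1/\ind(G)+\epsilon}$ --- bounds of this strength are known in many cases via Schmidt's method and the work of Ellenberg--Venkatesh --- then gives convergence whenever $m\cdot\ind(G)>2$, with the borderline case $m\cdot\ind(G)=2$ requiring a slightly sharper input matching the expected logarithmic factor of Malle's conjecture.

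For the meromorphic continuation to a neighborhood of $s=1$, split each summand as $Z_{\PP^{m}(L^{G_{1}})^{\orig}}(s) = \delta_{L^{G_{1}},m}/(s-1) + R_{L}(s)$ with $R_{L}$ holomorphic on a fixed disc around $s=1$. The singular part collects, up to slowly varying factors coming from Proposition~\ref{prop:Siegel-Brauer}, into the discriminant Dirichlet series $\sum_{L'\in\fie{G}{K}} d_{L'}^{-ms/2}$ of small $G$-fields; its abscissa of convergence is $2/(m\cdot\ind(G))$, so the natural pole sits at $s=1$ exactly in the critical case $m\cdot\ind(G)=2$, where its order should be $\beta(G,K)$ in accordance with Malle's prediction and with Conjecture~\ref{conj: Manin specific}. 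Uniform control of the regular remainders $\sum_{L} R_{L}(s)$ in a neighborhood of $s=1$ would then transfer to $Z_{X(K)^{\prim}}(s)$.

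The main obstacle, unsurprisingly, is precisely this last step: a meromorphic continuation of the $G$-field discriminant Dirichlet series beyond its abscissa of convergence is an analytic form of Malle's conjecture, and is currently available only in a handful of cases (abelian $G$, and low-degree $S_{n}$ via the prehomogeneous vector spaces of Shintani and the refinements of Taniguchi--Thorne). Consequently, the absolute-convergence half of Conjecture~\ref{conj: merom continuation} is within reach by the argument above, but the meromorphic-continuation half is of essentially the same depth as Malle's conjecture itself, and a full proof will likely require either substantial new input on the analytic distribution of number fields or a more geometric attack on $Z_{X(K)^{\prim}}(s)$ via harmonic analysis on the adelic points of $X$.
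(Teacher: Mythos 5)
The statement you are trying to prove is not a theorem of the paper but Conjecture \ref{conj: merom continuation} itself: the paper offers no proof, uses it as a standing hypothesis in Theorem \ref{thm: Malle implies Manin} and the final theorem, and only remarks that the absolute-convergence half would follow from Conjecture \ref{conj: Manin specific}. Your proposal is therefore rightly framed as a strategy rather than a proof, and your closing assessment --- that the meromorphic continuation is of essentially the same depth as an analytic form of Malle's conjecture --- is consistent with how the paper treats the statement. So there is no ``paper's proof'' to compare against; the question is only whether your sketch closes more of the gap than the paper does, and it does not.

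Concretely, even the absolute-convergence half of your argument has unproved inputs. First, the uniform bound $\bigl|Z_{\PP^{m}(L^{G_{1}})^{\orig}}(s)\bigr|\ll_{\epsilon}\delta_{L^{G_{1}},m}$ on $\Re(s)>1+\epsilon$ does not follow from Schanuel's asymptotic alone: Abel summation needs control of the error term $N_{\PP^{m}(L^{G_{1}})}(B)-\delta_{L^{G_{1}},m}B$ \emph{uniformly in the field}, i.e.\ a uniform Schanuel theorem with explicit dependence on $d_{L^{G_{1}}}$; this is a genuine (if plausible) additional input you should name. Second, convergence of the outer sum $\sum_{L'\in\fie{G}{K}}d_{L'}^{-m/2+\epsilon}$ requires an upper bound $M_{G,K}(B)\ll B^{m/2-\epsilon'}$; the unconditional bounds recorded in the paper (Schmidt's exponent $(n+2)/4$, Ellenberg--Venkatesh's $\exp(C_{2}\sqrt{\log n})$) exceed $m/2$ for most $(G,m)$ with $m\cdot\ind(G)\ge 2$, so this step is conditional on Malle-type upper bounds --- which is exactly the conditionality the paper builds into Theorem \ref{thm: Malle implies Manin}, so your route does not improve on it. Third, the assertion that there \emph{is} a pole at $s=1$ presupposes that the collected residue $\sum_{L}\delta_{L^{G_{1}},m}$ is nonzero, hence that $\Fie{G}{K}\ne\emptyset$; as Remark \ref{rem: thin primitive} notes, even the nonemptiness of $X(K)^{\prim}$ rests on unproved conjectures (inverse Galois / Colliot-Th\'el\`ene). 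Finally, interchanging the singular parts with the infinite sum over $L$ to produce a meromorphic continuation requires exactly the uniform control of the remainders $R_{L}(s)$ that you flag as the obstacle; in the critical case $m\cdot\ind(G)=2$ the paper itself (Remark \ref{rem: m ind =00003D2}) admits it does not know how to carry this out. In short: your decomposition via Theorem \ref{thm: zeta equal permutation} is the right frame and matches the paper's viewpoint, but every step beyond that frame is conditional, and the statement remains a conjecture.
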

The first part of this conjecture follows from Conjecture \ref{conj: Manin specific}.
Although assuming Conjecture \ref{conj: merom continuation} might
not be really necessary, it would simplify arguments.

Although we might encounter an unwished issue of limits, it seems
reasonable to expect the following.
\begin{conjecture}
\label{conj: delta sum}The function $Z_{X(K)^{\prim}}(s)$ has a
simple pole at $s=1$ if and only if 
\[
\sum_{F\in\fie GK}\delta_{F,m}<\infty.
\]
If these equivalent conditions hold, then 
\[
\Res_{s=1}Z_{X(K)^{\prim}}(s)=\frac{\sharp N_{S_{n}}(G)}{\sharp C_{S_{n}}(G)\cdot\sharp G}\cdot\sum_{F\in\fie GK}\delta_{F,m}.
\]
\end{conjecture}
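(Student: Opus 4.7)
The plan is to work term-by-term with the identity
\[
Z_{X(K)^{\prim}}(s)=\frac{1}{\sharp Z(G)}\sum_{L\in\Fie GK}Z_{\PP^{m}(L^{G_{1}})^{\orig}}(s)
\]
of Theorem \ref{thm: zeta equal permutation}, using the corollary immediately preceding the conjecture (which asserts that each summand has a simple pole at $s=1$ with \emph{positive} residue $\delta_{L^{G_{1}},m}$) and translating between $\Fie GK$ and $\fie GK$ at the end via the fiber-size formula of Lemma \ref{lem: phi-1}. Positivity of the individual residues is crucial: it lets one run real monotone arguments along the ray $s\searrow 1$ from the right.

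For the \emph{only if} direction, assume $Z_{X(K)^{\prim}}(s)$ has a simple pole at $s=1$ with residue $R$. For real $s>1$ every $Z_{\PP^{m}(L^{G_{1}})^{\orig}}(s)$ is a sum of strictly positive terms, so for any finite $S\subset\Fie GK$,
\[
\sum_{L\in S}\delta_{L^{G_{1}},m}=\lim_{s\searrow 1}(s-1)\sum_{L\in S}Z_{\PP^{m}(L^{G_{1}})^{\orig}}(s)\le\sharp Z(G)\cdot R,
\]
giving $\sum_{L\in\Fie GK}\delta_{L^{G_{1}},m}<\infty$, equivalently $\sum_{F\in\fie GK}\delta_{F,m}<\infty$ by Lemma \ref{lem: phi-1}. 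For the \emph{if} direction I would write each summand as $\delta_{L^{G_{1}},m}/(s-1)+R_{L}(s)$ with $R_{L}$ holomorphic near $s=1$, and verify that both $\sum_{L}\delta_{L^{G_{1}},m}/(s-1)$ and $\sum_{L}R_{L}(s)$ converge uniformly on a fixed small punctured disc around $s=1$; this forces the right-hand side, and hence $Z_{X(K)^{\prim}}(s)$, to have precisely a simple pole at $s=1$. Granting simplicity, the residue is obtained by interchanging limit and sum:
\[
\Res_{s=1}Z_{X(K)^{\prim}}(s)=\frac{1}{\sharp Z(G)}\sum_{L\in\Fie GK}\delta_{L^{G_{1}},m}=\frac{\sharp N_{S_{n}}(G)}{\sharp C_{S_{n}}(G)\cdot\sharp G}\sum_{F\in\fie GK}\delta_{F,m},
\]
the second equality invoking the constant fiber cardinality of $\phi$ from Lemma \ref{lem: phi-1}, so that the two $\sharp Z(G)$ factors cancel.

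The hard part will be the \emph{if} direction; it is exactly the ``unwished issue of limits'' the paper warns about. What one really needs is a uniform-in-$L$ estimate of the form
\[
\left|Z_{\PP^{m}(L^{G_{1}})^{\orig}}(s)-\frac{\delta_{L^{G_{1}},m}}{s-1}\right|\le C(s)\cdot\delta_{L^{G_{1}},m}
\]
on some fixed punctured neighborhood of $s=1$, with $C(s)$ independent of $L$. Via the M\"obius-inversion identity expressing $Z_{\PP^{m}(L^{G_{1}})^{\orig}}(s)$ in terms of the $Z_{\PP^{m}(L^{I})}([I:G_{1}]\cdot s)$, this reduces to uniform control of Dedekind zeta values, regulators, class numbers and archimedean invariants for the subfields $L^{I}$ as $L$ ranges over $\Fie GK$. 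The Siegel--Brauer input of Proposition \ref{prop:Siegel-Brauer} is of the right flavor but in isolation is seemingly too weak to exclude an inflation of pole order in the infinite sum, and supplying the missing bounds is essentially the real content of the conjecture.
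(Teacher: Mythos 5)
This statement is not proved in the paper: it is stated as Conjecture \ref{conj: delta sum}, and the sentence immediately preceding it (``Although we might encounter an unwished issue of limits, it seems reasonable to expect the following'') flags exactly the obstruction your argument runs into. So there is no proof in the paper to compare against, and your proposal must be judged as an attempt at an open problem. Within that framing, your \emph{only if} direction is sound, granted Conjecture \ref{conj: merom continuation} (needed so that $Z_{X(K)^{\prim}}(s)$ converges for $\Re(s)>1$, so that the term-by-term identity of Theorem \ref{thm: zeta equal permutation} applies there, and so that $\lim_{s\searrow 1}(s-1)Z_{X(K)^{\prim}}(s)$ exists): positivity of the terms for real $s>1$ and positivity of the residues $\delta_{L^{G_{1}},m}$ give the uniform bound on finite partial sums, and Lemma \ref{lem: phi-1} converts the sum over $\Fie GK$ into the sum over $\fie GK$ with the stated constant, the two factors $\sharp Z(G)$ cancelling as you say.

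The gap is in the \emph{if} direction and in the residue formula, and you have correctly located it yourself. To conclude that an infinite sum of functions, each with a simple pole at $s=1$ of residue $\delta_{L^{G_{1}},m}$, again has a simple pole with residue $\sum_{L}\delta_{L^{G_{1}},m}$, you must control the holomorphic remainders $R_{L}(s)=Z_{\PP^{m}(L^{G_{1}})^{\orig}}(s)-\delta_{L^{G_{1}},m}/(s-1)$ uniformly in $L$ on a fixed punctured neighborhood of $s=1$. No such bound is supplied by anything in the paper: it would amount to uniform estimates for the meromorphic continuations of the height zeta functions $Z_{\PP^{m}(F)}(s)$ across all $F\in\fie GK$, whereas the Siegel--Brauer input of Proposition \ref{prop:Siegel-Brauer} controls only the residues, not the remainders. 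Without such control the pole order can a priori inflate in the limit --- Remark \ref{rem: m ind =00003D2} describes precisely such an anticipated jump in the boundary case $m\cdot\ind(G)=2$ --- so your final displayed identity, obtained by ``interchanging limit and sum,'' is exactly the unjustified step. Your proposal is therefore a correct and well-organized reduction of the conjecture to the needed uniform estimate, not a proof of it.
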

\begin{thm}
\label{thm: Malle implies Manin}Let $G\subset S_{n}$ be a transitive
subgroup, $m$ a positive integer with $m\cdot\ind(G)>2$, and $X:=(\PP^{m})^{n}/G$.
Suppose that for these $G$ and $X$, Conjectures \ref{conj: merom continuation},
\ref{conj: delta sum} and \ref{conj: Malle} hold. Then, Conjecture
\ref{conj: Manin specific} holds for $X$.\end{thm}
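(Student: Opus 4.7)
My plan is to use Theorem~\ref{thm: zeta equal permutation} to translate Malle's asymptotic for $M_{G,K}(B)$ into control of the pole of $Z_{X(K)^{\prim}}(s)$ at $s=1$, and then apply the Tauberian theorem recalled at the start of Section~6.1. First, I would observe that the hypothesis $m\cdot\ind(G)>2$, combined with Proposition~\ref{prop: discrep age} and Lemma~\ref{lem: age index}, gives $\discrep(X)=\frac{m\cdot\ind(G)}{2}-1>0$, so $X$ is terminal and in particular $\cd(X)=0$. The conclusion of Conjecture~\ref{conj: Manin specific} thus simplifies to $N_{X(K)^{\prim}}(B)\sim c\cdot B$, and it suffices to exhibit a simple pole of $Z_{X(K)^{\prim}}(s)$ at $s=1$ with positive residue.

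The key analytic step is to verify the convergence condition in Conjecture~\ref{conj: delta sum}, namely $\sum_{F\in\fie GK}\delta_{F,m}<\infty$. I would appeal to Proposition~\ref{prop:Siegel-Brauer} for a uniform bound $\delta_{F,m}\ll d_F^{-m/2+\epsilon}$ over degree $n$ extensions $F/K$, valid for every $\epsilon>0$. The relation $d_F=d_K^n\cdot|N_{K/\QQ}(D_{F/K})|$ together with Malle's Conjecture~\ref{conj: Malle} yields
$$\sharp\{F\in\fie GK\mid d_F\le B\}=O\bigl(B^{1/\ind(G)}(\log B)^{\beta(G,K)-1}\bigr).$$
Partial summation then majorizes $\sum_F\delta_{F,m}$ by a constant times
$$\int_{d_K^n}^{\infty}B^{-m/2+\epsilon}\,d\bigl(B^{1/\ind(G)}(\log B)^{\beta(G,K)-1}\bigr),$$
which converges provided $m/2-\epsilon>1/\ind(G)$. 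Since $m\cdot\ind(G)>2$ makes $m/2-1/\ind(G)$ strictly positive, any sufficiently small choice of $\epsilon$ suffices.

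With this in hand, Conjecture~\ref{conj: delta sum} produces a simple pole of $Z_{X(K)^{\prim}}(s)$ at $s=1$ with residue $\frac{\sharp N_{S_n}(G)}{\sharp C_{S_n}(G)\cdot\sharp G}\sum_F\delta_{F,m}>0$. Combined with the meromorphic continuation in Conjecture~\ref{conj: merom continuation}, the Tauberian theorem applies with $a=1$ and $b=1$, yielding $N_{X(K)^{\prim}}(B)\sim c\cdot B$ with $c$ equal to this residue. Since $\cd(X)=0$, this is exactly the content of Conjecture~\ref{conj: Manin specific}.

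The main obstacle will be the partial summation estimate: the $\epsilon$-loss from Siegel-Brauer together with the factor $(\log B)^{\beta(G,K)-1}$ coming from Malle must both fit strictly below the exponent $m/2$, and it is precisely the strict inequality $m\cdot\ind(G)>2$ that leaves room for this. Everything else amounts to bookkeeping with the three assumed conjectures, and with the residue formula extracted from Theorem~\ref{thm: zeta equal permutation} together with Lemma~\ref{lem: phi-1} for converting large $G$-fields into small ones.
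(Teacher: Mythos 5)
Your proposal is correct and follows essentially the same route as the paper: reduce to $\cd(X)=0$ and a simple pole of $Z_{X(K)^{\prim}}(s)$ at $s=1$, then verify $\sum_{F\in\fie GK}\delta_{F,m}<\infty$ by combining Malle's asymptotic with the Siegel--Brauer bound $\delta_{F,m}\ll d_F^{-m/2+\epsilon}$, using the strict inequality $m/2>1/\ind(G)$. The only cosmetic difference is that you run the convergence argument by explicit partial summation, whereas the paper invokes its Lemma \ref{lem: abscissa Dirichlet general} on the abscissa of convergence of the Dirichlet series $\sum_F d_F^{-s}$; these are the same estimate.
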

\begin{proof}
Since $m\cdot\ind(G)>2$, $X$ is terminal, and $\cd(X)=0$. Therefore
it suffices to show that $Z_{X(K)^{\prim}}(s)$ has a simple pole
at $s=1$. From Conjecture \ref{conj: delta sum}, this is equivalent
to $\sum_{F\in\fie GK}\delta_{F,m}<\infty$. Conjecture \ref{conj: Malle}
shows
\[
\limsup_{B\to+\infty}\frac{\log M_{G,K}(B)}{\log B}=\frac{1}{\ind(G)}.
\]
From the lemma below, the Dirichlet series
\begin{equation}
\sum_{F\in\fie GK}|N_{K/\QQ}(D_{F/K})|^{-s}=d_{K}^{-ns}\cdot\sum_{F\in\fie GK}d_{F}^{-s}\label{eq: Dirichlet discriminant}
\end{equation}
has $1/\ind(G)$ as the abscissa of convergence. In particular, the
series converges for $s=m/2$. From Proposition \ref{prop:Siegel-Brauer},
$\sum_{F\in\fie GK}\delta_{F,m}<\infty$, as desired. \end{proof}
\begin{lem}[{\cite[T. I.2.7]{MR0259079}}]
\label{lem: abscissa Dirichlet general}For a Dirichlet series 
\[
f(s)=\sum_{n=1}^{\infty}\frac{a_{n}}{n^{-s}},
\]
put
\[
a:=\limsup_{n\to+\infty}\frac{\log\left(\sum_{m\le n}a_{m}\right)}{\log n}.
\]
If $0<a<\infty$, then $a$ is the abscissa of convergence of $f(s)$. 
\end{lem}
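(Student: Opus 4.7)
The plan is to convert information about the growth of the partial sums $A_N := \sum_{m \leq N} a_m$ into information about the convergence of the Dirichlet series, via Abel summation. The governing identity is
\[
\sum_{m=1}^{N} a_m m^{-s} = A_N N^{-s} + s \int_1^N A_{\lfloor t \rfloor}\, t^{-s-1}\, dt,
\]
which makes the link between the size of $A_N$ and the tail behavior of $f(s)$ transparent. Throughout, one exploits the implicit hypothesis $a_n \geq 0$ (built into the formulation through the appearance of $\log A_n$ in the definition of $a$, and satisfied in all of the paper's applications, where $a_n$ counts fields or points of a given discriminant or height).

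First I would show that $f(s)$ converges absolutely for $\Re(s) > a$. Fix $\epsilon > 0$. By the definition of $a$ as a $\limsup$, there exists $C > 0$ with $A_n \leq C n^{a + \epsilon}$ for every $n$. When $\Re(s) > a + \epsilon$, the boundary term $A_N N^{-s}$ tends to $0$, and the integrand in the Abel-summation identity is dominated by $C\, t^{a + \epsilon - \Re(s) - 1}$, which is integrable on $[1, \infty)$. Letting $\epsilon \to 0^+$ yields absolute convergence for $\Re(s) > a$, so the abscissa of convergence is at most $a$.

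Conversely, to show divergence for every real $\sigma < a$ (which suffices, since for a Dirichlet series with non-negative coefficients the abscissa of convergence is determined by real $s$), I would argue by contradiction. Assume $\sum a_n n^{-\sigma} < \infty$. Since $a_n \geq 0$, the dyadic tail estimate
\[
\sum_{N < n \leq 2N} a_n \leq (2N)^{\sigma} \sum_{N < n \leq 2N} a_n n^{-\sigma} = o(N^{\sigma})
\]
holds, and a routine dyadic iteration gives $A_N = o(N^{\sigma})$. This forces $\limsup_N \log A_N / \log N \leq \sigma < a$, contradicting the definition of $a$.

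The main technical point is the second direction: one must pass from the analytic fact of convergence back to a quantitative growth bound on $A_n$. The dyadic tail argument above is the cleanest way to do this and essentially reproduces the classical proof in \cite{MR0259079}; it relies on the non-negativity of $a_n$, which is implicit in the hypothesis and automatic in every application made in the paper.
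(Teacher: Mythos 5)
Your proof is correct. Note that the paper itself gives no proof of this lemma -- it is quoted verbatim from the cited reference -- so there is nothing internal to compare against; your argument (Abel summation for the upper bound on the abscissa, and the dyadic tail estimate $\sum_{N<n\le 2N}a_n \ll N^{\sigma}\sum_{n>N}a_nn^{-\sigma}=o(N^{\sigma})$ for the lower bound) is exactly the classical Hardy--Riesz/Tenenbaum proof that the citation points to. Your reading of the non-negativity of the $a_n$ as implicit is reasonable and consistent with every application in the paper (the classical statement drops this hypothesis by writing $\log|A_n|$, and the $o(N^{\sigma})$ partial-sum estimate in the converse direction in fact holds for complex coefficients as well, but nothing in the paper requires that generality).
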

Malle's conjecture is known to hold for abelian groups, for instance.
On the other hand, even for abelian groups, the quotient variety $X=(\PP^{m})^{n}/G$
is not generally rational. Thus, once Conjectures \ref{conj: merom continuation}
and \ref{conj: delta sum} are verified, Theorem \ref{thm: Malle implies Manin}
would give non-trivial affirmative results. 
\begin{rem}
\label{rem: m ind =00003D2}It appears harder, but more interesting
to consider the case where $m\cdot\ind(G)=2$, (equivalently, $X=(\PP^{m})^{n}/G$
is canonical but not terminal). In this case, Conjectures in this
paper suggest that, when taking the limit
\[
\lim_{N\to+\infty}\sum_{\substack{L\in\Fie GK\\
d_{L^{G_{1}}}\le N
}
}Z_{\PP^{m}(L^{G_{1}})^{\orig}}(s),
\]
the order of pole at $s=1$ would jump up from one to 
\[
\cd(X)+1=\beta(G,K)+1.
\]
However the author does not know either how to show this from information
on the density of $G$-fields, or how to get a positive result on
Malle's conjecture from information on $Z_{X(K)^{\prim}}(s)$. 

The condition $m\cdot\ind(G)=2$ is restrictive, although it is satisfied
by the important case where $G=S_{n}$ and $m=2$. It might be an
interesting problem to ask what is a substitute for $(\PP^{m})^{n}/G$
when $m$ is only a rational number. 
\end{rem}

\subsection{The density of number fields}

For general $G$ and $n$, the best known upper bounds for $M_{G,K}(B)$
are the one by Schmidt \cite{MR1330934}, 
\[
M_{G,K}(B)\ll B^{\frac{n+2}{4}},
\]
and the one by Ellenberg and Venkatesh \cite{Ellenberg:2006js},
\[
N_{G,K}(B)\ll(B\cdot C_{1})^{\exp(C_{2}\sqrt{\log n})},
\]
with $C_{1}$ and $C_{2}$ positive constants. For $G=A_{n}$, Larson
and Rolen \cite{Larson:2013jn} showed
\[
N_{A_{n},K}(B)\ll B^{\frac{n^{2}-2}{4(n-1)}}\cdot(\log B)^{2n+1}.
\]
In particular, a bound independent of $n$ has not been obtained yet. 
\begin{thm}
Suppose that Conjectures \ref{conj: Manin specific}, \ref{conj: merom continuation}
and \ref{conj: delta sum} hold for the given transitive subgroup
$G$ and $m=\left\lceil 3/\ind(G)\right\rceil $. Then, for any $\epsilon>0$,
\[
M_{G,K}(B)\ll B^{\frac{1}{2}\left\lceil \frac{3}{\ind(G)}\right\rceil +\epsilon}.
\]
\end{thm}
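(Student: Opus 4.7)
My plan is to reverse the direction of Theorem \ref{thm: Malle implies Manin}, turning the predicted analytic behavior of $Z_{X(K)^{\prim}}(s)$ into a counting bound for $G$-fields.

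Step one is to unwind the geometry. With $m=\left\lceil 3/\ind(G)\right\rceil$ we always have $m\cdot\ind(G)\ge 3>2$, so $\discrep(X)=m\cdot\ind(G)/2-1>0$ and $X:=(\PP^{m})^{n}/G$ is terminal, which forces $\cd(X)=0$. Conjecture \ref{conj: Manin specific} therefore predicts $N_{X(K)^{\prim}}(B)\sim cB$, and combining this with the meromorphic continuation from Conjecture \ref{conj: merom continuation} yields that $Z_{X(K)^{\prim}}(s)$ has a simple pole at $s=1$.

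Step two transfers this across the equality of height zeta functions from Theorem \ref{thm: zeta equal permutation}. By the forward direction of Conjecture \ref{conj: delta sum}, the simple pole forces $\sum_{F\in\fie GK}\delta_{F,m}<\infty$. Proposition \ref{prop:Siegel-Brauer}, applied to the family of degree $n[K:\QQ]$ extensions of $\QQ$, provides, for every $\epsilon>0$, a constant $c_{\epsilon}$ with $d_{F}^{-m/2-\epsilon}\le c_{\epsilon}\,\delta_{F,m}$ uniformly in $F\in\fie GK$; consequently
\[
\sum_{F\in\fie GK}d_{F}^{-m/2-\epsilon}<\infty.
\]

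Step three is elementary. Using the tower formula $d_{F}=|N_{K/\QQ}(D_{F/K})|\cdot d_{K}^{n}$, the count $M_{G,K}(B)$ is the number of $F\in\fie GK$ with $d_{F}\le Bd_{K}^{n}$, so bounding $1\le(Bd_{K}^{n})^{m/2+\epsilon}d_{F}^{-m/2-\epsilon}$ on this range and summing gives
\[
M_{G,K}(B)\;\le\;(Bd_{K}^{n})^{m/2+\epsilon}\sum_{F\in\fie GK}d_{F}^{-m/2-\epsilon}\;\ll_{K,\epsilon}\;B^{m/2+\epsilon},
\]
which is the desired bound since $m/2=\tfrac{1}{2}\lceil 3/\ind(G)\rceil$. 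I do not expect a serious obstacle: the argument is a direct chain through the three assumed conjectures together with Siegel--Brauer. The only point needing a brief verification is that the implicit constant in Proposition \ref{prop:Siegel-Brauer} can be taken uniform across $F\in\fie GK$, which is automatic because the absolute degree $n[K:\QQ]$ is held fixed throughout the sum.
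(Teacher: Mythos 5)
Your argument is correct, and its first two steps coincide with the paper's: from Conjectures \ref{conj: Manin specific} and \ref{conj: merom continuation} you get a simple pole of $Z_{X(K)^{\prim}}(s)$ at $s=1$, Conjecture \ref{conj: delta sum} converts this into $\sum_{F\in\fie GK}\delta_{F,m}<\infty$, and Proposition \ref{prop:Siegel-Brauer} (with the absolute degree $n[K:\QQ]$ fixed, exactly as you note) yields $\sum_{F}d_{F}^{-m/2-\epsilon}<\infty$. Where you diverge is the final conversion of this convergence into the bound on $M_{G,K}(B)$. The paper routes through Lemma \ref{lem: abscissa Dirichlet general}: it identifies $m/2$ as an upper bound for the abscissa of convergence of the Dirichlet series (\ref{eq: Dirichlet discriminant}) and reads off $\limsup_{B}\log M_{G,K}(B)/\log B\le m/2$; but that lemma requires $0<a<\infty$ as a hypothesis, which is why the paper must first invoke the unconditional bounds of Schmidt and Ellenberg--Venkatesh. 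Your Chebyshev/Rankin-type inequality $M_{G,K}(B)\le(Bd_{K}^{n})^{m/2+\epsilon}\sum_{F}d_{F}^{-m/2-\epsilon}$ reaches the same conclusion directly from the convergence of the series, with no appeal to any prior unconditional bound on $M_{G,K}(B)$. This makes your finish slightly more elementary and self-contained; the paper's version, in exchange, packages the conclusion as a statement about the abscissa of convergence, which fits the surrounding discussion of height zeta functions. Both are valid.
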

\begin{proof}
We first note that from known bounds above, 
\[
0<\limsup_{B\to+\infty}\frac{\log M_{G,K}(B)}{\log B}<\infty.
\]
Therefore we can apply Lemma \ref{lem: abscissa Dirichlet general}
to Dirichlet series (\ref{eq: Dirichlet discriminant}). For $m=\left\lceil 3/\ind(G)\right\rceil $,
since $m\cdot\ind(G)>2$, $X=(\PP^{m})^{n}/G$ is a terminal Fano
variety. From Conjectures \ref{conj: Manin specific} and \ref{conj: delta sum},
we have 
\[
\sum_{F\in\fie GK}\delta_{F,m}<\infty.
\]
From Proposition \ref{prop:Siegel-Brauer}, for any $\epsilon>0$,
\[
\sum_{F\in\fie GK}|N_{K/\QQ}(D_{F/K})|^{-m/2-\epsilon}=d_{K}^{-n(m/2+\epsilon)}\cdot\sum_{F\in\fie GK}d_{F}^{-m/2-\epsilon}<\infty.
\]
Hence the abscissa of convergence of the Dirichlet series is at most
$m/2$. From Lemma \ref{lem: abscissa Dirichlet general}, 
\[
\limsup_{B\to+\infty}\frac{\log M_{G,K}(B)}{\log B}\le\frac{m}{2}.
\]
For every $\epsilon>0$, there exists $B_{0}$ such that for every
$B\ge B_{0}$, 
\[
\frac{\log M_{G,K}(B)}{\log B}\le\frac{m}{2}+\epsilon,
\]
equivalently
\[
M_{G,K}(B)\le B^{m/2+\epsilon}.
\]
This shows 
\[
M_{G,K}(B)\ll B^{m/2+\epsilon}.
\]

\end{proof}
When $G=S_{n}$, the theorem gives
\[
M_{S_{n},K}(B)\ll B^{3/2+\epsilon},
\]
and when $G=A_{n}$, 
\[
M_{S_{n},K}(B)\ll B^{1+\epsilon}.
\]
Thus, once Conjectures \ref{conj: Manin specific}, \ref{conj: delta sum}
and \ref{conj: merom continuation} are proved, we would considerably
improve present bounds for general $n$, although these conjectures
might be equally hard to prove.

\bibliographystyle{amsalpha}
\bibliography{/Users/highernash/Dropbox/Math_Articles/mybib}

\end{document}